\documentclass{amsart}


%

\usepackage{amsfonts}
\usepackage{amsmath}
\usepackage{amsthm}
\usepackage{graphicx}
\usepackage{wasysym}
\usepackage{amssymb}
\usepackage{mathrsfs}
\usepackage{amscd}
\usepackage{lscape}
\usepackage{longtable}
\usepackage{rotating}
\usepackage{phonetic}
\usepackage{pstricks}
\usepackage{pst-node}
\usepackage[crop=off]{auto-pst-pdf}
\usepackage{etex}
\usepackage{float}

\input prepictex
\input pictex
\input postpictex

\newtheorem{lemma}{Lemma}[section]
\newtheorem{thm}[lemma]{Theorem}

\newtheorem{prop}[lemma]{Proposition}
\newtheorem{corol}[lemma]{Corollary}
\newtheorem{conj}[lemma]{Conjecture}

\newtheorem{example}[lemma]{Example}

\title{INEQUALITIES BETWEEN GAMMA-POLYNOMIALS OF GRAPH-ASSOCIAHEDRA}
\author{NATALIE AISBETT}
\address{School of Mathematics and Statistics\\
University of Sydney, NSW, 2006\\
Australia}
\email{N.Aisbett@maths.usyd.edu.au}
\date{}

\begin{document}
\maketitle{}
\numberwithin{figure}{section}
\numberwithin{equation}{section}

\begin{abstract}
We prove a conjecture of Postnikov, Reiner and Williams by defining a partial order on the set of tree graphs with $n$ vertices that induces inequalities between the $\gamma$-polynomials of their associated graph-associahedra. The partial order is given by relating trees that can be obtained from one another by operations called tree shifts. We also show that tree shifts lower the $\gamma$-polynomials of graphs that are not trees, as do the flossing moves of Babson and Reiner.
\end{abstract}

\begin{section}{Introduction}
For any building set $\mathcal{B}$ there is an associated simple polytope $P_\mathcal{B}$ called the \emph{nestohedron} (see \cite{po} Section 7 and \cite{prw} Section 6). When $\mathcal{B} = \mathcal{B}(G)$ is the building set determined by a graph $G$, $P_{\mathcal{B}(G)}$ is the well-known graph-associahedron of $G$ (see \cite{bv}, \cite{Er}, \cite{prw} Sections 7 and 12, and \cite{vol}). The numbers of faces of $P_\mathcal{B}$ of each dimension are conveniently encapsulated in its $\gamma$-polynomial $\gamma(\mathcal{B}) =\gamma(P_\mathcal{B})$ (see \cite{prw} Section 1 for the definition). Postnikov, Reiner and Williams conjectured the following monotonicity property of the $\gamma$-polynomials of the graph-associahedra of trees.
\begin{conj}
\cite[Conjecture 14.1]{prw}. There exists a partial order $\le$ on the set of (unlabelled, isomorphism classes of) trees with $n$ vertices, with the following properties:
\begin{itemize}
\item $\mathrm{Path_n}$ is the unique $\le$-minimal element,
\item $K_{1,n-1}$ is the unique $\le$-maximal element,
\item $T \le T'$ implies $\gamma(\mathcal{B}(T)) \le \gamma(\mathcal{B}(T'))$.
\end{itemize}
\label{bigconj}
\end{conj}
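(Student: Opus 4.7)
The plan is to prove the conjecture by constructing the partial order explicitly from a local combinatorial move on trees. First, I would define a \emph{tree shift} as follows: fix an edge $\{u,v\}$ of $T$ and a nonempty subset $S\subseteq N(v)\setminus\{u\}$; the shift detaches each $w\in S$ (together with the component of $T\setminus\{v\}$ containing $w$) from $v$ and reattaches it at $u$, producing a new tree $T'$ in which $u$ is ``more central.'' Define $T\le T'$ as the reflexive-transitive closure of the relation ``$T'$ is obtained from $T$ by a single tree shift for which $\deg_{T'}(u)>\deg_T(v)$.'' A short computation shows that under this restriction $\sum_x \binom{\deg(x)}{2}$ strictly increases with each shift, so $\le$ is antisymmetric. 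The extremal claims — that $\mathrm{Path}_n$ is the unique $\le$-minimum and $K_{1,n-1}$ the unique $\le$-maximum — then reduce to showing that any tree other than $\mathrm{Path}_n$ is the image of some legal shift (take a vertex of maximum degree and reverse-engineer an appropriate $\tilde T$) and any tree other than $K_{1,n-1}$ is the source of some legal shift (pick a high-degree vertex with an even higher-degree neighbor).

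The substantive content of the conjecture is the inequality $\gamma(\mathcal{B}(T))\le\gamma(\mathcal{B}(T'))$ whenever $T'$ is a single tree-shift of $T$. My strategy is to use a combinatorial model for the $\gamma$-coefficients of a graph-associahedron, as developed in \cite{prw} in terms of peak or descent statistics on shellings of maximal nested sets, and to construct, for each $i$, an injection from the objects counted by $\gamma_i(\mathcal{B}(T))$ into those counted by $\gamma_i(\mathcal{B}(T'))$. The injection is the identity away from the shifted branches and ``reroutes'' the local structure near $\{u,v\}$ through $u$ rather than through $v$. I expect to proceed by induction on $n$, using the fact that links of vertices of $P_{\mathcal{B}(T)}$ are themselves graph-associahedra of smaller forests and that $\gamma$ is multiplicative under products of polytopes; this reduces the injection to a small number of explicit local cases, split according to whether the nested-set object uses the edge $\{u,v\}$, uses edges within the shifted branches, or avoids the affected region entirely.

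The main obstacle I anticipate is ensuring that the injection respects the $\gamma$-decomposition, not merely the $h$-polynomial. Because the $\gamma$-polynomial encodes only the symmetric part of $h$, an arbitrary injection on the combinatorial objects counting $h$-coefficients will not descend to the $\gamma$-level; the rerouting has to be chosen to preserve the peak/descent statistic used in the relevant model. Getting the bookkeeping right on that statistic as a branch is moved from $v$ to $u$ is where the bulk of the combinatorial work will lie. Once the tree case is established, the extensions asserted in the abstract — that tree shifts lower $\gamma$ also for non-tree graphs, and that the flossing moves of Babson–Reiner do the same — should follow by applying the same injection framework to this wider class of local moves on building sets, with cycles handled by verifying that the rerouting does not create forbidden nested-set configurations.
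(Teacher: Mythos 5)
Your definition of tree shift differs from the paper's: you move a subset of the branches at $v$ across a single edge $\{u,v\}$, subject to a degree condition, whereas the paper's move detaches an entire pendant subtree $F$ from a branch vertex $c$ of degree $\ge 3$ and reattaches it at the leaf $l$ at the far end of the pendant path from $c$, possibly crossing several edges. The antisymmetry and extremality arguments are similar in spirit (the paper simply counts leaves, which drop by exactly one per shift, where you use $\sum_x\binom{\deg x}{2}$); either statistic works, and that part of your proposal is fine.

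The genuine gap is in the $\gamma$-inequality, which is the whole content of the conjecture. You propose to build, for each $i$, an injection between the combinatorial objects counting $\gamma_i(\mathcal B(T))$ and those counting $\gamma_i(\mathcal B(T'))$ in a peak/descent model from \cite{prw}, and you then remark that making this injection respect the $\gamma$-statistic (rather than merely the $h$-statistic) is where the bulk of the work lies. That bulk is not done, and it is precisely the hard step: as written the proposal identifies the obstruction but does not overcome it. The paper takes a different and more tractable route, using the Buchstaber--Volodin flag-building-set machinery rather than any explicit combinatorial $\gamma$-model. By Theorem \ref{flagbuild} one can pass from the flag building set $\mathcal B(G-v)\cup\{\{v\},[n]\}$ to $\mathcal B(G)$ by adjoining elements one at a time, and Lemma \ref{vollem} computes the $\gamma$-increment at each step as $t\,\gamma(\mathcal B|_I)\gamma(\mathcal B/I)$, which is coefficientwise nonnegative by Theorem \ref{posthm}. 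The injection the paper constructs is between the two chains of adjoined building-set elements (for $G'$ and for $G$), and the comparison of increments reduces, through the restriction and contraction factors, to tree shifts on strictly smaller graphs, so an induction on $n$ closes the argument. This bypasses entirely the $h$-versus-$\gamma$ bookkeeping you were worried about, because the increment formula already lives at the $\gamma$-level. To make your route work you would need to fix a concrete $\gamma$-nonnegativity model for graph-associahedra, exhibit the rerouting map explicitly, and verify it preserves the relevant statistic; without that, the central inequality is asserted rather than proved.
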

\noindent Here $\mathrm{Path_n}$ denotes the graph that is a path with $n$ vertices, and $K_{1,n-1}$ is the graph with $n$ vertices with exactly one vertex of degree $n-1$ and $n-1$ vertices of degree 1.\\

This conjecture implies the following lower and upper bounds for the $\gamma$-polynomial of a tree $T$ with $n$ vertices \begin{equation}\gamma(\mathcal{B}(\mathrm{Path_n})) \le \gamma(\mathcal{B}(T)) \le \gamma(\mathcal{B}(K_{1,n-1})).\label{eqone}\end{equation}
These upper and lower bound theorems have been proven by Buchstaber and Volodin \cite[Theorem 9.4]{bv}. Moreover, they show that the lower bound is attained only for $\mathrm{Path_n}$ and the upper bound is attained only for $K_{1,n-1}.$ Their proof relies on some general results about $\gamma$-polynomials of flag nestohedra which were announced in \cite{vol} and whose proofs are included in \cite{bv}; see Lemmas \ref{equivlem}, \ref{vollem}, \ref{combeq} and theorems \ref{flagbuild}, \ref{posthm} and \ref{volthm}. Note that the methods of Buchstaber and Volodin require one to work with the more general class of flag nestohedra in order to deduce the results about graph-associahedra. In this paper we make use of these theorems to show that Conjecture \ref{bigconj} can be proven with the relation of tree shifts that we define.\\

We also use these theorems to show that flossing moves lower the $\gamma$-polynomial. Flossing moves were originally defined in \cite{br} Section 4.2 and it was suggested in \cite{prw} Section 14 that they might lower the $\gamma$-polynomial. Our definition of flossing move is more general than that in \cite{br} as it can be applied to any pair of leaves that floss a vertex, and it does not have to be applied to a tree graph. \\

Section 2 contains preliminary definitions and results relating to polytopes and building sets. Section 3 contains more specific results relating to the $\gamma$-polynomial that are needed for the main theorems in Sections 4 and 5. Section 4 introduces tree shifts and in Theorem \ref{bigthm} we show that they lower the $\gamma$-polynomial of the associated graph-associahedra. We then prove Conjecture \ref{bigconj}, in Theorem \ref{partial}. Section 5 introduces flossing moves and Theorem \ref{bigthmtwo} shows that they lower the $\gamma$-polynomials.\\

\textbf{Acknowledgements}\\

This paper forms part of my PhD research in the School of Mathematics and Statistics at the University of Sydney. I would like to thank my supervisor Anthony Henderson for his assistance and suggested improvements. I would also like to thank the referee for their careful reading and helpful comments.

\end{section}

\begin{section}{Building sets and nestohedra}

A \emph{building set} $\mathcal{B}$ on a finite set $S$ is a set of non empty subsets of $S$ such that
\begin{itemize}
\item For any $I,~ J \in \mathcal{B}$ such that $I \cap J \ne \emptyset$, $I \cup J \in \mathcal{B}$.
\item $\mathcal{B}$ contains the singletons $\{i\}$, for all $i \in S$.
\end{itemize}
$\mathcal{B}$ is \emph{connected} if it contains $S$. For any building set $\mathcal{B}$, $\mathcal{B}_{max}$ denotes the set of maximal elements of $\mathcal{B}$ with respect to inclusion. The elements of $\mathcal{B}_{max}$ form a disjoint union of $S$, and if $\mathcal{B}$ is connected then $\mathcal{B}_{max} = \{S\}$. Building sets $\mathcal{B}_1$, $\mathcal{B}_2$ on $S$ are \emph{equivalent}, denoted $\mathcal{B}_1 \cong \mathcal{B}_2$, if there is a permutation $\sigma: S \rightarrow S$ that induces a one to one correspondence $\mathcal{B}_1 \rightarrow \mathcal{B}_2$. \\

\begin{example}Let $G$ be a graph with no loops or multiple edges, with $n$ vertices labelled distinctly from $[n]$. Then the graphical building set $\mathcal{B}(G)$ is the set of subsets of $[n]$ such that the induced subgraph of $G$ is connected. $\mathcal{B}(G)_{max}$ is the set of connected components of $G$.
\end{example}

Let $\mathcal{B}$ be a building set on $S$ and $I \subseteq S$. The \emph{restriction of $\mathcal{B}$ to $I$} is the building set $$\mathcal{B}|_{I}: = \{J ~|~ J \subseteq I, ~\hbox{and }~J \in \mathcal{B} \}~~\hbox{on $I$}.$$ The \emph{contraction of $\mathcal{B}$ by $I$} is the building set
$$\mathcal{B}/I: = \{J-(J \cap I)~|~J \in \mathcal{B},~J \not \subseteq I\}~~\hbox{on $S-I$.}$$

\begin{example} If $G$ is a graph on $[n]$, and $I \in \mathcal{B}(G)$, then $\mathcal{B}(G)/I =\mathcal{B}(G')$ where $G'$ is the graph on $[n] -I$ such that any two vertices $i,~j \in [n] -I$ are adjacent if they are adjacent in $G$, or both $i$ and $j$ are adjacent to vertices in $I$ in the full graph $G$.
\end{example}

Given a building set $\mathcal{B}$, a subset $N \subseteq \mathcal{B} \backslash \mathcal{B}_{max}$ is a \emph{nested set} if it satisfies
\begin{itemize}
\item For any $I,~ J \in N$, either $I \subseteq J$, $J \subseteq I$, or $I \cap J = \emptyset$.
\item For any collection of $k \ge 2$ disjoint subsets $J_1,....,J_k \in N$, the union $J_1 \cup \cdots \cup J_k \not \in \mathcal{B}$.
\end{itemize}

\noindent The \emph{nested set complex} $\Delta_\mathcal{B}$ is the simplicial complex on $\mathcal{B} - \mathcal{B}_{max}$ whose faces are the nested sets. We associate a polytope to a building set as follows. Let $e_1,....,e_n$ denote the endpoints of the coordinate vectors in $\mathbb{R}^n$. Given $I \subseteq [n]$, define the simplex $\Delta_I : = ConvexHull(e_i~|~i \in I)$. Let $\mathcal{B}$ be a building set on $[n]$. The \emph{nestohedron} $P_\mathcal{B}$ is a polytope given by the Minkowski sum of the simplices $\Delta_I$ for all $I \in B$

$$P_{\mathcal{B}} := \sum_{I \in \mathcal{B}}\Delta_I.$$

\noindent If $\mathcal{B}$ is a graphical building set $P_{\mathcal{B}}$ is known as the \emph{graph-associahedron}. The nestohedron is related to the nested sets of any building set $\mathcal{B}$, as described in the following theorem.

\begin{thm}
\cite[Theorem 7.4]{po} \cite[Theorem 3.14]{fs}. Let $\mathcal{B}$ be a building set on $[n]$. The nestohedron $P_{\mathcal{B}}$ is a simple polytope of dimension $n - |\mathcal{B}_{max}|$. The simplicial polytope polar dual to $P_\mathcal{B}$ has boundary complex isomorphic to $\Delta_{\mathcal{B}}$. \label{facethm}
\end{thm}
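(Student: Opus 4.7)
The plan is to analyze $P_\mathcal{B}$ via the standard fact that the normal fan of a Minkowski sum is the common refinement of the normal fans of the summands. The normal fan of the simplex $\Delta_I$ partitions $\mathbb{R}^n$ (modulo its lineality space) into $|I|$ top-dimensional cones, one for each choice of a $\varphi$-maximizer $i \in I$. Hence for a generic linear functional $\varphi$ the selected vertex of $P_\mathcal{B}$ is $v_\varphi = \sum_{I \in \mathcal{B}} e_{i_\varphi(I)}$, where $i_\varphi(I)$ is the unique $\varphi$-maximizer in $I$, and every face of $P_\mathcal{B}$ corresponds to a cone of the common refinement, equivalently to a compatible choice of face on each summand.

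First I would pin down the dimension. For each $I \in \mathcal{B}_{max}$ the union axiom of a building set forces every $J \in \mathcal{B}$ meeting $I$ to lie in $I$, so $\sum_{i \in I} x_i$ is constant on $P_\mathcal{B}$. These $|\mathcal{B}_{max}|$ independent affine conditions confine $P_\mathcal{B}$ to an affine subspace of dimension $n - |\mathcal{B}_{max}|$, and exhibiting enough generically chosen vertices shows $P_\mathcal{B}$ is full-dimensional there.

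The heart of the argument is a bijection between vertices of $P_\mathcal{B}$ and maximal nested sets. Given a generic $\varphi$ and the family of maximizers $\{i_\varphi(I)\}_{I \in \mathcal{B}}$, I would define $N(v_\varphi) \subseteq \mathcal{B} \setminus \mathcal{B}_{max}$ to consist of those $J$ for which $i_\varphi(J)$ differs from $i_\varphi(K)$, where $K$ is the minimal element of $\mathcal{B}$ strictly containing $J$. The two nested-set axioms translate directly into statements about how $\varphi$-maximizers behave under unions of intersecting building-set elements and under passage to strictly larger sets; both follow from the union axiom for $\mathcal{B}$. The critical count is $|N(v_\varphi)| = n - |\mathcal{B}_{max}|$ for every generic $\varphi$, which I would obtain inductively: each element of $\mathcal{B} \setminus \mathcal{B}_{max}$ either contributes a new element to the nested set (when its maximizer drops from that of the containing set) or else reuses an already-fixed maximizer, and totalling the drops along chains from singletons up to $\mathcal{B}_{max}$ gives exactly $n - |\mathcal{B}_{max}|$.

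Extending this correspondence from vertices to all faces, via cones of lower dimension in the common refinement, yields a poset isomorphism between the face lattice of $P_\mathcal{B}$ under reverse inclusion and the face poset of $\Delta_\mathcal{B}$, which gives the polar dual identification. Simplicity of $P_\mathcal{B}$ follows, since each vertex lies in exactly $n - |\mathcal{B}_{max}|$ facets. The main obstacle is the counting step above: verifying that $|N(v_\varphi)| = n - |\mathcal{B}_{max}|$ for \emph{every} generic $\varphi$ is the combinatorial core of the result, and it is precisely the union axiom of a building set that prevents pathological drops and ensures the correct uniform count.
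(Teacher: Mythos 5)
The paper does not prove this theorem; it quotes it from Postnikov and from Feichtner--Sturmfels, so there is no in-house argument to compare against. Your strategy is the standard Minkowski-sum/normal-fan route and is the right one in outline: the dimension count from the affine relations $\sum_{i\in I}x_i = \mathrm{const}$ for $I\in\mathcal{B}_{max}$ is correct (the union axiom does force every $J\in\mathcal{B}$ meeting $I\in\mathcal{B}_{max}$ to lie in $I$), as is the reduction to matching maximal cones of the common refinement with maximal nested sets.

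The gap is in the definition of $N(v_\varphi)$. You declare $J\in N(v_\varphi)$ when $i_\varphi(J)\ne i_\varphi(K)$ ``where $K$ is the minimal element of $\mathcal{B}$ strictly containing $J$,'' but such a $K$ need not be unique, and the answer genuinely depends on the choice. For the path graph on $\{1,2,3\}$ with $\varphi_1<\varphi_2<\varphi_3$, the element $J=\{2\}$ has two minimal covers in $\mathcal{B}(G)$, namely $\{1,2\}$ and $\{2,3\}$; the first has the same $\varphi$-maximizer as $J$ and the second does not, so your rule does not decide whether $\{2\}$ belongs to $N(v_\varphi)$, and only one reading gives the correct maximal nested set $\{\{1\},\{1,2\}\}$ of size $n-|\mathcal{B}_{max}|=2$. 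The condition you want is that $i_\varphi(K)\ne i_\varphi(J)$ for \emph{every} $K\in\mathcal{B}$ with $K\supsetneq J$; equivalently, $J=\bigcup\{L\in\mathcal{B}:i_\varphi(L)=i_\varphi(J)\}$, where the union axiom guarantees this union lies in $\mathcal{B}$ since all the $L$ share the vertex $i_\varphi(J)$. With that fix the count $|N(v_\varphi)|=n-|\mathcal{B}_{max}|$ and the nested-set axioms go through along the lines you sketch, though the passage from the vertex correspondence to the full face-poset isomorphism (and surjectivity onto all nested sets, not just maximal ones) still needs to be written out.
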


For a simple $d$ dimensional polytope $P$, the $f$-polynomial, $h$-polynomial and $\gamma$-polynomial are polynomials in $\mathbb{Z}[t]$ defined as follows:

$$f(P)(t):= f_0 + f_1t + \cdots +f_{d}t^d,$$ where $f_i$ is the number of $i$-dimensional faces of $P$. The $h$-polynomial is given by

$$h(P)(t+1): = f(P)(t),$$ and it is known to be positive and symmetric. Since it is symmetric, it can be written

$$\sum_{i=0}^dh_it^i = \sum_{i=0}^{\lfloor\frac{d}{2}\rfloor}\gamma_it^i(1+t)^{d-2i},$$ for some $\gamma_i \in \mathbb{Z}$, and the $\gamma$-polynomial is given by
$$\gamma(P)(t): = \gamma_0 + \gamma_1t + \cdots +\gamma_{\lfloor \frac{d}{2}\rfloor}t^{\lfloor \frac{d}{2}\rfloor }.$$

\noindent If a polytope $P$ is combinatorially equivalent to $P_1 \times P_2 \times \cdots \times P_n$ where $P_1,...,P_n$ are a set of polytopes, then by the definition of combinatorial equivalence we have that $f(P) =  f(P_1)f(P_2)...f(P_n)$, and consequently $\gamma(P) = \gamma(P_1)\gamma(P_2)...\gamma(P_n)$. When $\mathcal{B}$ is a building set, we denote the $\gamma$-polynomial for $P_{\mathcal{B}}$ by $\gamma(\mathcal{B})$. If $\mathcal{B}$ and $\mathcal{B'}$ are building sets, the notation $\gamma(\mathcal{B}) \le \gamma(\mathcal{B'})$ implies that for all $i$ the coefficient of $t^i$ in $\gamma(\mathcal{B})$ is less than or equal to the coefficient of $t^i$ in $\gamma(\mathcal{B'})$. \\

A $d-1$-dimensional face of a $d$-dimensional polytope is called a \emph{facet}. A simple polytope $P$ is \emph{flag} if any collection of pairwise intersecting facets has non empty intersection. A building set $\mathcal{B}$ is \emph{flag} if $P_{\mathcal{B}}$ is flag. The conditions in Proposition \ref{flagprop} determine whether a building set is flag.

\begin{prop}
\cite[Proposition 7.1]{prw}.
For a building set $\mathcal{B}$, the following are equivalent:
\begin{itemize}
\item[(1)] $P_{\mathcal{B}}$ is flag.\\
\item[(2)] If $J_1,....,J_m$, $m \ge 2$, are disjoint and $J_1 \cup \cdots \cup J_m \in \mathcal{B}$, then the sets can be reindexed so that for some  $k$ such that $1 \le k \le m-1$, $J_1 \cup \cdots \cup J_k \in \mathcal{B}$ and $J_{k+1} \cup \cdots \cup J_m \in \mathcal{B}$.\\
\item[(3)] If $N \subseteq \mathcal{B} \backslash \mathcal{B}_{max}$ such that
\begin{itemize}
\item for any $I,~J \in N$ either $I \subseteq J$, $J \subseteq I$ or $I \cap J = \emptyset$,
\item for any $I,~J \in N$ such that $I \cap J = \emptyset$, one has $I \cup J \not \in \mathcal{B}$,
\end{itemize}
then $N$ is a nested set.\label{flagprop}
\end{itemize}
\end{prop}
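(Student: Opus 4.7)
The plan is to establish the equivalences along the route (1)$\Leftrightarrow$(3) and (2)$\Leftrightarrow$(3), since (3) is essentially the combinatorial flag-complex condition sitting between the geometric statement (1) and the operational condition (2). For (1)$\Leftrightarrow$(3) I would invoke Theorem \ref{facethm}: the facets of $P_\mathcal{B}$ are in bijection with the vertices of $\Delta_\mathcal{B}$, namely the elements of $\mathcal{B}\backslash\mathcal{B}_{max}$, and a collection of facets of $P_\mathcal{B}$ has nonempty common intersection if and only if the corresponding subset $N\subseteq\mathcal{B}\backslash\mathcal{B}_{max}$ is a nested set (a face of $\Delta_\mathcal{B}$). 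A simple polytope is flag exactly when its dual simplicial complex is flag, meaning every pairwise-adjacent vertex subset is a face. Specialising the definition of nested set to $k=2$, two vertices $I,J$ of $\Delta_\mathcal{B}$ are adjacent if and only if $I\subseteq J$, $J\subseteq I$, or $I\cap J=\emptyset$ with $I\cup J\notin\mathcal{B}$---exactly the pair of conditions in (3). So flagness of $\Delta_\mathcal{B}$ translates verbatim to the conclusion of (3).

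For (2)$\Rightarrow$(3) I would argue by minimal counterexample. Assume $N$ satisfies the pairwise conditions of (3) but is not nested, so there exist disjoint $J_1,\ldots,J_k\in N$ with $k\ge 2$ and $J_1\cup\cdots\cup J_k\in\mathcal{B}$; choose such $k$ minimal. The case $k=2$ directly contradicts the second pairwise condition of (3). For $k\ge 3$, condition (2) yields, after reindexing, $J_1\cup\cdots\cup J_\ell\in\mathcal{B}$ and $J_{\ell+1}\cup\cdots\cup J_k\in\mathcal{B}$ with $1\le\ell\le k-1$. Since either $\ell\ge 2$ or $k-\ell\ge 2$, one of these two unions involves between $2$ and $k-1$ of the $J_i$'s, contradicting the minimality of $k$.

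The converse (3)$\Rightarrow$(2) proceeds by induction on $m$. The base $m=2$ is immediate (take $k=1$, and note that the $J_i$ are themselves in $\mathcal{B}$). For $m\ge 3$, set $N=\{J_1,\ldots,J_m\}$; since $J_1\cup\cdots\cup J_m\in\mathcal{B}$, $N$ is not nested, and the first pairwise condition of (3) holds by disjointness, so the contrapositive of (3) forces the second to fail---some pair $J_i\cup J_j$ lies in $\mathcal{B}$. After reindexing so that $J_1\cup J_2\in\mathcal{B}$, apply the inductive hypothesis to the $m-1$ disjoint sets $J_1\cup J_2,J_3,\ldots,J_m$, whose total union is still $J_1\cup\cdots\cup J_m\in\mathcal{B}$; expanding $J_1\cup J_2$ inside the resulting two-block partition yields the sought partition of $\{J_1,\ldots,J_m\}$. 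The main technical point will be the (1)$\Leftrightarrow$(3) step, where one must carefully translate ``pairwise intersecting facets'' of the simple polytope $P_\mathcal{B}$ into ``pairwise adjacent vertices'' of $\Delta_\mathcal{B}$ via the polarity of Theorem \ref{facethm}; the two halves of (2)$\Leftrightarrow$(3) are then routine once the minimal counterexample and the induction on $m$ are correctly set up.
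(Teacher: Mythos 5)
The paper does not actually prove this proposition; it is stated only as a citation of \cite[Proposition~7.1]{prw}, so there is no in-text argument to compare against. Your reconstruction is essentially the standard one and is correct in outline: (1)$\Leftrightarrow$(3) is the translation, via Theorem~\ref{facethm}, of ``$P_\mathcal{B}$ is flag'' into ``$\Delta_\mathcal{B}$ is a flag complex,'' with the observation that two vertices $I,J$ of $\Delta_\mathcal{B}$ are joined by an edge precisely when $\{I,J\}$ satisfies the two pairwise conditions listed in (3); and (2)$\Leftrightarrow$(3) is a clean minimal-counterexample / induction-on-$m$ argument.

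Two small points deserve attention. First, as printed here condition (2) omits the hypothesis that $J_1,\ldots,J_m$ themselves belong to $\mathcal{B}$, which is present in PRW's original statement and is genuinely needed --- without it (2) fails already for $\mathcal{B}(\mathrm{Path}_3)$ with $J_1=\{1,3\}$, $J_2=\{2\}$. You implicitly reinstate this hypothesis (you need it both to anchor the base case $m=2$ and to form $N=\{J_1,\ldots,J_m\}$ as a subset of $\mathcal{B}$), but it would be worth saying so explicitly. Second, when you invoke (3) with $N=\{J_1,\ldots,J_m\}$ in the (3)$\Rightarrow$(2) direction, you should note that each $J_i$ is a \emph{proper} subset of $J_1\cup\cdots\cup J_m\in\mathcal{B}$ (since $m\ge 2$ and the $J_i$ are disjoint and nonempty), hence $J_i\notin\mathcal{B}_{\max}$ and $N\subseteq\mathcal{B}\backslash\mathcal{B}_{\max}$; this is needed both for (3) to be applicable and for ``$N$ is not nested'' to even be a meaningful statement under the paper's definition. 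These are routine verifications and do not affect the soundness of the argument.
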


It follows from Proposition \ref{flagprop} that a graphical building set is flag. A \emph{minimal flag building set} $\mathcal{D}$ on a set $S$ is a connected building set on $S$ that is flag, such that that no proper subset of its elements form a connected flag building set on $S$. Minimal flag building sets are described in detail in \cite[Section 7.2]{prw}. They take the form of a binary tree, where the vertices biject to elements of $\mathcal{D}$, and the direct descendants of any non leaf vertex that represents an element $I \in \mathcal{D}$ are the two elements in $\mathcal{D}$ whose disjoint union is $I$. For any minimal flag building set $\mathcal{D}$, $\gamma(\mathcal{D}) =1$ (see \cite{prw} Section 7.2).\\

Let $\mathcal{B}$ be a building set. A \emph{binary decomposition} or \emph{decomposition} of a non singleton element $I \in \mathcal{B}$ is a set $\mathcal{D} \subseteq \mathcal{B}$ that forms a minimal flag building set on $I$. Suppose that $I \in B$ has a binary decomposition $\mathcal{D}$. The two maximal elements $D_1,~D_2 \in \mathcal{D}-\{I\}$ with respect to inclusion are the \emph{maximal components} of $I$ in $\mathcal{D}$. The following lemma gives another definition of when a building set is flag.

\begin{lemma}
A building set $\mathcal{B}$ is flag if and only if every non singleton $I \in \mathcal{B}$ has a binary decomposition. \label{flaglem}
\end{lemma}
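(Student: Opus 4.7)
The statement is an equivalence; I would prove each direction separately, both by induction on $|I|$.

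For the forward direction, assume $\mathcal{B}$ is flag and let $I \in \mathcal{B}$ be a non-singleton. The singletons $\{a\}$ for $a \in I$ all lie in $\mathcal{B}$, are pairwise disjoint, and have union $I \in \mathcal{B}$, so condition (2) of Proposition \ref{flagprop} yields $D_1, D_2 \in \mathcal{B}$ with $D_1 \sqcup D_2 = I$. By the inductive hypothesis each non-singleton $D_i$ has a binary decomposition $\mathcal{D}_i$; set $\mathcal{D}_i = \{D_i\}$ if $D_i$ is a singleton. Then $\{I\} \cup \mathcal{D}_1 \cup \mathcal{D}_2$ is contained in $\mathcal{B}$ and forms a minimal flag building set on $I$, corresponding to the binary tree with root $I$, children $D_1, D_2$, and subtrees inherited from $\mathcal{D}_1, \mathcal{D}_2$.

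For the converse, assume every non-singleton element of $\mathcal{B}$ has a binary decomposition, and verify condition (2) of Proposition \ref{flagprop} by induction on $|I|$. Let $J_1, \ldots, J_m \in \mathcal{B}$ be pairwise disjoint with $I := \bigcup J_i \in \mathcal{B}$ and $m \geq 2$. The hypothesis applied to $I$ gives $D_1, D_2 \in \mathcal{B}$ with $D_1 \sqcup D_2 = I$. If no $J_i$ straddles this split, the partition of $\{1, \ldots, m\}$ into $\{i : J_i \subseteq D_1\}$ and $\{i : J_i \subseteq D_2\}$ is the required reindexing: both blocks are nonempty (since $\bigcup J_i = I$ while $D_1, D_2$ are nonempty), and the two block-unions are $D_1, D_2 \in \mathcal{B}$.

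The main obstacle is the case in which some $J_i$ straddles both $D_1$ and $D_2$. My plan is to pass to the smaller building sets $\mathcal{B}|_{D_j}$ and $\mathcal{B}|_I / D_j$: these also satisfy the binary-decomposition hypothesis (restrictions trivially; contractions follow by pruning the binary decomposition tree of each element, deleting leaves lying in $D_j$ and contracting away the resulting unary parents) and are therefore flag by the inductive hypothesis on $|I|$. Applying condition (2) inside $\mathcal{B}|_I / D_1$ to the disjoint family $\{J_i : J_i \subseteq D_2\} \cup \{J_i \cap D_2 : J_i \text{ straddles}\}$, whose union is $D_2$, partitions these elements into two groups whose unions lie in $\mathcal{B}|_I / D_1$; a symmetric argument inside $\mathcal{B}|_I / D_2$ partitions the elements meeting $D_1$. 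The most delicate step I foresee is reconciling these two partitions into a single reindexing of the original $J_i$'s whose block-unions lie in $\mathcal{B}$, which I expect to manage by choosing appropriate lifts of the contracted splits back to elements of $\mathcal{B}$ via the closure axiom.
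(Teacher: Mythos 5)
Your forward direction is fine and is essentially what the paper invokes via \cite[Proposition 7.3]{prw}: apply condition (2) of Proposition \ref{flagprop} to the singletons, then glue decompositions of the two halves together recursively. The converse, however, has a genuine gap, and you yourself flag where it is. The paper does not try to verify condition (2) of Proposition \ref{flagprop}; it verifies condition (3), which comes with the extra hypothesis that every \emph{pair} $S_i, S_j$ already has $S_i \cup S_j \notin \mathcal{B}$. That hypothesis is exactly what makes the argument close: fixing a binary decomposition $\mathcal{D}$ of $I = S_1 \cup \cdots \cup S_k$ and inducting on the size of elements of $\mathcal{D}$, one shows every node of $\mathcal{D}$ lies in a single $S_i$ (if a node $D = D' \cup D''$ had $D' \subseteq S_i$, $D'' \subseteq S_j$, then $D \cup S_i \cup S_j = S_i \cup S_j \in \mathcal{B}$ by the closure axiom, contradicting the hypothesis), hence $I$ itself lies in a single $S_i$, forcing $k = 1$.

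In your route the corresponding ``pairwise unions not in $\mathcal{B}$'' hypothesis is absent, and this is precisely why the straddler case is not just a technicality. The proposed reconciliation of the two contracted splits does not obviously work: when two or more of the $J_i$ straddle $D_1$ and $D_2$, the split you obtain in $\mathcal{B}|_I / D_1$ can separate, say, $J_1 \cap D_2$ from $J_2 \cap D_2$ while the split in $\mathcal{B}|_I / D_2$ separates $J_2 \cap D_1$ from $J_3 \cap D_1$, and there is no canonical way to merge these into a single two-block partition of $\{J_1,\dots,J_m\}$; nor do the chosen representatives in $\mathcal{B}$ of the contracted unions need to restrict to unions of whole $J_i$'s. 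If you want to stay with condition (2), a cleaner repair is available and is in the spirit of the paper's argument: fix a binary decomposition $\mathcal{D}$ of $I$ and take a minimal non-singleton $D \in \mathcal{D}$ that is not contained in any single $J_i$; its two children then lie in distinct $J_i, J_j$, so by the closure axiom $J_i \cup J_j \supseteq D$ is in $\mathcal{B}$. Replace $J_i, J_j$ by their union and induct on $m$. But as written, the reconciliation step is a missing idea rather than a routine verification, so the converse is not yet a proof.
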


\begin{proof}
The only if part follows immediately from \cite[Proposition 7.3]{prw}.\\

For the if part, suppose that $\mathcal{B}$ is a building set and every element has a binary decomposition. We show that $\mathcal{B}$ is flag by showing that part (3) of Proposition \ref{flagprop} holds. Suppose by contradiction that (3) does not hold so that there exists a set $\mathcal{S} = \{S_1,...,S_k\} \subset \mathcal{B}$, $k \ge 3$, such that $S_i \cap S_j = \emptyset$, $S_i \cup S_j \not \in \mathcal{B}$ for all $i \ne j$, and $S_1 \cup \cdots \cup S_k = I \in \mathcal{B}$. Fix a decomposition $\mathcal{D}$ of $I$. Now consider all one element sets of $\mathcal{D}$ (the set of all $\{i\}$ such that $i \in I$). They are each a subset of one element of $\mathcal{S}$. Suppose by induction that all elements in $\mathcal{D}$ that are sets with $\le i$ elements are a subset of one element of $\mathcal{S}$. Then any $i+1$ element subset of $\mathcal{D}$ must also be contained in one element of $\mathcal{S}$. This is true since each $i+1$ element subset of $\mathcal{D}$ is the union of two elements of $\mathcal{D}$ each with less than $i+1$ elements. These two subsets must be contained in the same element of $\mathcal{S}$ since if they were contained in two distinct elements then their union would intersect two elements $S_i$ and $S_j$ of $\mathcal{S}$ which implies $S_i \cup S_j \in \mathcal{B}$. As the size of the elements of the decomposition increase, they are eventually equal to $I$, which implies that $k=1$, a contradiction since $k \ge 3$.
\end{proof}

\begin{corol}
A building set $\mathcal{B}$ is flag if and only if for every non singleton $I \in \mathcal{B}$, there exists two elements $D_1,~D_2 \in \mathcal{B}$ such that $D_1 \cap D_2 = \emptyset$ and $D_1 \cup D_2 =I.$
\end{corol}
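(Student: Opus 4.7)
The forward implication is immediate from Lemma \ref{flaglem}. If $\mathcal{B}$ is flag, then any non singleton $I \in \mathcal{B}$ admits a binary decomposition $\mathcal{D} \subseteq \mathcal{B}$, and its two maximal components $D_1, D_2$ are by definition disjoint elements of $\mathcal{B}$ with $D_1 \cup D_2 = I$.

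For the reverse implication, the plan is to bootstrap the weaker hypothesis (every non singleton splits as a disjoint union of two elements of $\mathcal{B}$) into the stronger statement that every non singleton has a binary decomposition, and then invoke Lemma \ref{flaglem}. I would do this by induction on $|I|$. The base case $|I| = 2$ is handled directly: if $I = \{a, b\}$ then $\mathcal{D} = \{\{a\}, \{b\}, I\}$ is a minimal flag building set on $I$ sitting inside $\mathcal{B}$. For the inductive step with $|I| \geq 3$, apply the hypothesis to write $I = D_1 \cup D_2$ with $D_1, D_2 \in \mathcal{B}$ disjoint. Each $D_k$ has strictly smaller size, so if $|D_k| \geq 2$ then by induction it has a binary decomposition $\mathcal{D}_k \subseteq \mathcal{B}$; if $|D_k| = 1$ take $\mathcal{D}_k = \{D_k\}$. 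Then assemble the candidate decomposition
\[
\mathcal{D} := \{I\} \cup \mathcal{D}_1 \cup \mathcal{D}_2 \subseteq \mathcal{B}.
\]

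The main verification step is to check that $\mathcal{D}$ is a minimal flag building set on $I$. The singletons of $I$ are all included (via $\mathcal{D}_1$ and $\mathcal{D}_2$), and closure under intersecting unions reduces to the internal closure of each $\mathcal{D}_k$ together with the fact that elements of $\mathcal{D}_1$ and $\mathcal{D}_2$ never intersect (since they lie in the disjoint sets $D_1, D_2$) and that $I$ absorbs every element of $\mathcal{D}$ under union. The flag property and the minimality condition follow from the binary tree structure: $\mathcal{D}$ corresponds to the tree whose root is $I$, whose two children are $D_1, D_2$, and whose subtrees at $D_1, D_2$ are the binary trees underlying $\mathcal{D}_1, \mathcal{D}_2$. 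I do not expect a genuine obstacle here; the only delicate point is keeping the singleton case $|D_k| = 1$ straight so that the recursive assembly yields a bona fide binary decomposition, after which Lemma \ref{flaglem} closes the argument.
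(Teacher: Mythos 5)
Your proposal is correct. The paper states this corollary without proof, treating it as an immediate consequence of Lemma \ref{flaglem}; your argument is exactly the natural fill-in: the forward direction reads off the two maximal components of a binary decomposition, and the reverse direction bootstraps the weak hypothesis (every non-singleton splits as a disjoint pair in $\mathcal{B}$) into the stronger one (every non-singleton has a binary decomposition in $\mathcal{B}$) by induction on $|I|$, assembling $\mathcal{D} = \{I\} \cup \mathcal{D}_1 \cup \mathcal{D}_2$ and then invoking the lemma. The verification you flag as the ``delicate point'' is indeed routine given the binary-tree characterization of minimal flag building sets quoted from \cite[Section 7.2]{prw}, so there is no gap.
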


\begin{lemma}
Suppose $\mathcal{B}$ is a flag building set. If $I,J \in \mathcal{B}$ and $J \subsetneq I$, then there is a decomposition of $I$ in $\mathcal{B}$ that contains $J$. \label{decomp}
\end{lemma}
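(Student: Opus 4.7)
The plan is induction on $|I|$, with the crux being the construction of a top-level binary split $I = D_1 \sqcup D_2$ in $\mathcal{B}$ with $J \subseteq D_1$; once such a split is secured, one recursively decomposes $D_1$ and $D_2$ and joins the pieces under $I$. The base case $|I| = 2$ is immediate, since $J$ must then be a singleton and the unique decomposition of $I$ in $\mathcal{B}$ contains every singleton of $I$.

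For the inductive step ($|I| \ge 3$), I would produce the desired split by feeding a carefully chosen partition of $I$ into Proposition \ref{flagprop}(2). Specifically, partition $I$ into the pieces $J$ together with the singletons $\{a\}$ for each $a \in I \setminus J$. Every piece lies in $\mathcal{B}$ (the singletons by the definition of a building set, and $J$ by hypothesis), and their union is $I \in \mathcal{B}$. Proposition \ref{flagprop}(2) then yields a reindexing of these pieces and an index $k$, strictly between $0$ and the number of pieces, such that both the prefix union and the suffix union lie in $\mathcal{B}$. Since $J$ is itself one of the pieces, it sits entirely in either the prefix or the suffix; labelling whichever of the two unions contains $J$ by $D_1$ and the other by $D_2$ gives the desired split, with $D_1, D_2 \in \mathcal{B}$ both nonempty, $D_1 \sqcup D_2 = I$, and $J \subseteq D_1$.

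With the split in hand, if $J = D_1$ I take any decomposition $\mathcal{D}_1$ of $D_1$ in $\mathcal{B}$ (which exists by Lemma \ref{flaglem}, and contains $J$ as its root); otherwise $J \subsetneq D_1$ and, since $|D_1| < |I|$, the inductive hypothesis supplies a decomposition $\mathcal{D}_1$ of $D_1$ in $\mathcal{B}$ containing $J$. Choosing any decomposition $\mathcal{D}_2$ of $D_2$ (again by Lemma \ref{flaglem}), the set $\{I\} \cup \mathcal{D}_1 \cup \mathcal{D}_2$ is a decomposition of $I$ containing $J$. The main obstacle is precisely the existence of the initial split; the key idea for producing it is to keep $J$ intact as a single piece of the partition, which prevents the prefix--suffix dichotomy of Proposition \ref{flagprop}(2) from cutting $J$ across the two sides.
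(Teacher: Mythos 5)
Your proof is correct and follows essentially the same route as the paper's: both start from the partition of $I$ into $J$ together with the singletons of $I \setminus J$, feed it into Proposition \ref{flagprop}(2) to get a binary split that keeps $J$ whole on one side, and then recurse. The only difference is presentational: you organize the recursion as a clean induction on $|I|$, while the paper phrases it as ``repeatedly perform this same procedure'' and at the end appends a decomposition of $J$; these are the same argument.
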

\begin{proof}
Consider the set $\{J ,\{i_1\},...,\{i_k\}\}$ where $\{i_1,...,i_k\} = I-J$. This is a set of disjoint elements whose union is in $\mathcal{B}$. Therefore, by Proposition \ref{flagprop} part (2) we can reindex these sets until we obtain two disjoint sets each in $\mathcal{B}$ whose union is $I$. We can repeatedly perform this same procedure on the elements in $\{J,\{i_1\},\{i_2\},...,\{i_k\}\}$ that are subsets of each of the new sets obtained at each step. All of the new sets obtained with reindexing, together with a decomposition of $J$, and the element $I$ are a decomposition of $I$ that contains $J$.
\end{proof}

\end{section}

\begin{section}{Face shavings of flag building sets}

The following Theorem is proven by Volodin \cite{vol}.

\begin{thm}\cite[Lemma 6]{vol}.
Let $\mathcal{B}$ and $\mathcal{B}'$ be connected flag building sets on $[n]$ such that $\mathcal{B} \subseteq \mathcal{B}'$. Then $\mathcal{B}'$ can be obtained from $\mathcal{B}$ by successively adding elements so that at each step the set is a flag building set. \label{flagbuild}
\end{thm}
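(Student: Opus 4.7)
The plan is to induct on $|\mathcal{B}' \setminus \mathcal{B}|$. The base case $\mathcal{B} = \mathcal{B}'$ is trivial, so it suffices to find a single $I \in \mathcal{B}' \setminus \mathcal{B}$ such that $\mathcal{B} \cup \{I\}$ is again a flag building set; the inductive hypothesis applied to $\mathcal{B} \cup \{I\} \subseteq \mathcal{B}'$ then completes the argument. By Corollary 2.2 and the building set axioms, $\mathcal{B} \cup \{I\}$ is a flag building set if and only if (a) $I$ admits a disjoint decomposition $I = D_1 \cup D_2$ with $D_1, D_2 \in \mathcal{B}$, and (b) for every $J \in \mathcal{B}$ with $I \cap J \ne \emptyset$ and $J \not\subseteq I$ one has $I \cup J \in \mathcal{B}$. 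Two easy observations narrow the search: any inclusion-maximal element of $\mathcal{B}' \setminus \mathcal{B}$ automatically satisfies (b), since any strictly larger element of $\mathcal{B}'$ must lie in $\mathcal{B}$; and any inclusion-minimal element automatically satisfies (a), since its $\mathcal{B}'$-decomposition (guaranteed by Corollary 2.2 inside $\mathcal{B}'$) must have both parts in $\mathcal{B}$ by minimality.

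To reconcile (a) and (b) in one element, I would argue as follows. Call $(I_0, J_0)$ a \emph{bad pair} if $I_0 \in \mathcal{B}' \setminus \mathcal{B}$, $J_0 \in \mathcal{B}$, $I_0 \cap J_0 \ne \emptyset$, $J_0 \not\subseteq I_0$, and $I_0 \cup J_0 \notin \mathcal{B}$. If no bad pair exists, then (b) holds vacuously for any inclusion-minimal $I \in \mathcal{B}' \setminus \mathcal{B}$, and (a) holds by the minimality observation above, so such an $I$ works. Otherwise, pick a bad pair $(I_0, J_0)$ maximizing $|I_0 \cup J_0|$ and set $I := I_0 \cup J_0 \in \mathcal{B}' \setminus \mathcal{B}$. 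The maximality of $|I_0 \cup J_0|$ forces (b) for this $I$: any offending $J$ would produce a bad pair $(I, J)$ with strictly larger union. For (a), apply Lemma 2.3 to $J_0 \subsetneq I$ inside the flag building set $\mathcal{B}'$ to get a $\mathcal{B}'$-decomposition $I = D_1 \cup D_2$ with $J_0 \subseteq D_1$, and then use maximality of the bad pair to show $D_1, D_2 \in \mathcal{B}$.

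The step I expect to be the main obstacle is verifying (a) for the chosen $I$ in the bad-pair case, namely showing that both parts of the $\mathcal{B}'$-decomposition of $I$ produced by Lemma 2.3 actually lie in $\mathcal{B}$. The natural approach is to rule out $D_i \in \mathcal{B}' \setminus \mathcal{B}$ by extracting a bad pair involving $D_i$ with union strictly larger than $|I|$, contradicting maximality; executing this cleanly may require refining the choice of bad pair with a tie-breaking rule (such as additionally maximizing $|J_0|$) or iterating Lemma 2.3 together with Proposition 2.1(2).
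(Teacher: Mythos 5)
The paper does not give its own proof of this statement; it is quoted directly from Volodin's Lemma~6, so there is no internal argument to compare against, and your proposal has to be judged on its own merits. Your reduction to conditions (a) and (b), the observation that inclusion-maximal elements of $\mathcal{B}'\setminus\mathcal{B}$ satisfy (b) and inclusion-minimal ones satisfy (a), and the no-bad-pair case are all correct. (A small slip: the results you want are the corollary following Lemma~\ref{flaglem} and Lemma~\ref{decomp}, not ``Corollary 2.2'' and ``Lemma 2.3''.)

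The genuine gap is exactly the step you flagged, and I do not think the proposed repair closes it. After you choose a bad pair $(I_0,J_0)$ maximizing $|I_0\cup J_0|$ and set $I=I_0\cup J_0$, both maximal components $D_1,D_2$ of any $\mathcal{B}'$-decomposition of $I$ are \emph{proper} subsets of $I$. So to contradict maximality you would need a $J\in\mathcal{B}$ making $(D_i,J)$ a bad pair with $|D_i\cup J|>|I|$, which forces $J$ to reach outside $I$ by more than $|I\setminus D_i|$ elements; nothing in the setup produces such a $J$, and the suggested tie-break on $|J_0|$ controls only the component containing $J_0$, not the other one. As written, (a) simply is not established.

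There is a small re-targeting that repairs the argument while keeping your structure. Let $\mathcal{M}$ be the set of $I\in\mathcal{B}'\setminus\mathcal{B}$ admitting disjoint $D_1,D_2\in\mathcal{B}$ with $D_1\cup D_2=I$; your minimality observation shows $\mathcal{M}\neq\emptyset$. Take $I$ inclusion-maximal in $\mathcal{M}$, say $I=D_1\sqcup D_2$. Then (a) holds by construction, and (b) follows by propagating this decomposition: if $J\in\mathcal{B}$ meets $I$, $J\not\subseteq I$, and $I\cup J\notin\mathcal{B}$, then either $J$ meets both $D_1$ and $D_2$, so $D_1\cup J$ and $D_2\cup J$ lie in $\mathcal{B}$ and intersect in $J$, forcing $I\cup J\in\mathcal{B}$ (contradiction); or $J$ meets exactly one of them, say $D_1$, and then $D_1\cup J$ and $D_2$ are a disjoint $\mathcal{B}$-decomposition of $I\cup J$, putting $I\cup J\in\mathcal{M}$ strictly above $I$ (contradiction). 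In short, maximize ``has a $\mathcal{B}$-decomposition'' rather than ``size of a bad-pair union''; the rest of your outline then goes through.
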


Suppose that a connected flag building set $\mathcal{B}'$ on $[n]$ is obtained from a flag building set $\mathcal{B}$ on $[n]$ by adding an element $I$. Then $I$ has a binary decomposition in $\mathcal{B}'$ with two maximal components $D_1$, $D_2$. This implies that $P_{\mathcal{B}'}$ can be obtained by shaving the codimension 2 face of $P_{\mathcal{B}}$ that corresponds to the nested set $\{D_1, D_2\}$.

\begin{lemma}
Let $\mathcal{B}$ be a building set with nestohedron $P_{\mathcal{B}}$. Suppose that $F_0$ is a facet of $P_{\mathcal{B}}$ corresponding to a (non-maximal) building set element $I$. Then the face poset of $F_0$ is isomorphic to the poset of faces of $P_{\mathcal{B}|_I} \times P_{\mathcal{B}/I}$. \label{equivlem}
\end{lemma}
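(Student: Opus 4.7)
The plan is to pass to the polar dual simplicial complexes via Theorem \ref{facethm} and exhibit a combinatorial isomorphism there. Since the boundary complex of the polar dual of $P_\mathcal{B}$ is $\Delta_\mathcal{B}$, faces of $P_\mathcal{B}$ correspond antitonely to faces of $\Delta_\mathcal{B}$, with the facet $F_0$ corresponding to the vertex $\{I\}$. Faces of $F_0$ then correspond to faces of $\Delta_\mathcal{B}$ containing $I$, so the polar dual of $F_0$ has boundary complex $\mathrm{link}_{\Delta_\mathcal{B}}(\{I\})$. In the same way, the polar dual of the product $P_{\mathcal{B}|_I}\times P_{\mathcal{B}/I}$ has boundary complex isomorphic to the join $\Delta_{\mathcal{B}|_I} * \Delta_{\mathcal{B}/I}$. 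Hence it suffices to produce a simplicial isomorphism
$$
\mathrm{link}_{\Delta_\mathcal{B}}(\{I\}) \;\cong\; \Delta_{\mathcal{B}|_I} * \Delta_{\mathcal{B}/I}.
$$

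To build this isomorphism, fix a nested set $N$ of $\mathcal{B}$ with $I \in N$. Nestedness forces each $J \in N \setminus \{I\}$ to satisfy exactly one of $J \subsetneq I$, $J \supsetneq I$, or $J \cap I = \emptyset$. Split
$$
N_1 \;:=\; \{J \in N : J \subsetneq I\}, \qquad N_2 \;:=\; \{J \setminus I : J \in N,\; J \neq I,\; J \not\subseteq I\},
$$
so that $N_1 \subseteq \mathcal{B}|_I$ and $N_2 \subseteq \mathcal{B}/I$. Checking that $N_1$ is a nested set of $\mathcal{B}|_I$ and $N_2$ is a nested set of $\mathcal{B}/I$ is routine: the laminar condition descends directly, a forbidden disjoint union inside $N_1$ landing in $\mathcal{B}|_I$ is already a forbidden disjoint union in $N$, and a forbidden disjoint union in $N_2$ landing in $\mathcal{B}/I$ lifts back to a disjoint union of the corresponding elements of $N$ whose union lies in $\mathcal{B}$.

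For the inverse, given $(N_1, N_2)$ with $N_1$ nested in $\mathcal{B}|_I$ and $N_2$ nested in $\mathcal{B}/I$, each $J' \in N_2$ has at most two preimages in $\mathcal{B}$ that could be nested with $I$: the set $J'$ itself (when $J' \in \mathcal{B}$) and the set $J' \cup I$ (when $J' \cup I \in \mathcal{B}$). The disjoint-union axiom forces a unique choice: if $J' \cup I \in \mathcal{B}$ one must lift $J'$ to $\widetilde{J'} := J' \cup I$, since otherwise the disjoint pair $\{I, J'\}$ would be a pair of nested-set elements with union in $\mathcal{B}$; otherwise one lifts to $\widetilde{J'} := J'$, which automatically lies in $\mathcal{B}$ by the defining property of $\mathcal{B}/I$. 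Setting $N := \{I\} \cup N_1 \cup \{\widetilde{J'} : J' \in N_2\}$ gives the candidate preimage. The main technical obstacle is verifying that $N$ really is a nested set: pairwise laminarity across the three pieces is immediate from how each piece sits with respect to $I$, but the disjoint-union condition takes more care — one analyses a hypothetical disjoint subcollection of $N$ whose union lies in $\mathcal{B}$ by intersecting with $I$ and with its complement, and uses restriction to $I$ and contraction by $I$ to push the violation down to $N_1$ or $N_2$.

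The two constructions are mutually inverse and preserve inclusion of nested sets, so they define the desired simplicial isomorphism $\mathrm{link}_{\Delta_\mathcal{B}}(\{I\}) \cong \Delta_{\mathcal{B}|_I} * \Delta_{\mathcal{B}/I}$; dualizing then yields the stated isomorphism of face posets.
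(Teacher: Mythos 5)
Your proof is correct and takes essentially the same approach as the paper: both pass to the dual nested set complex and exhibit the same explicit bijection between nested sets of $\mathcal{B}$ containing $I$ and pairs in $\Delta_{\mathcal{B}|_I}\times\Delta_{\mathcal{B}/I}$, with the same forced lift $J'\mapsto J'\cup I$ when $J'\cup I\in\mathcal{B}$. The only cosmetic difference is that you frame the argument via links and joins of the polar-dual simplicial complexes, while the paper speaks directly of the subcomplex of nested sets containing $I$.
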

\begin{proof}
The poset of faces of $F_0$ is the subposet of the faces of $P$, consisting of faces that are contained in $F_0$. Since the facet $F_0$ corresponds to the nested set $\{I\}$, the set of faces of $P$ that are contained in $F_0$ correspond to nested sets that contain $I$. The complex of nested sets of $\mathcal{B}$ that contain $I$ is isomorphic to $\Delta_{\mathcal{B}|_I} \times \Delta_{\mathcal{B}/I}$. The isomorphism is given by

$$(N_1,N_2) \in \Delta_{\mathcal{B}|_I} \times \Delta_{\mathcal{B}/I} \mapsto N_1 \cup N_2' \cup \{I\},$$
where $N_2':= \{D~|~D \in N_2~\hbox{and}~D \cup I \not \in \mathcal{B}\} \cup \{D \cup I~|~D \in N_2,~D \cup I \in \mathcal{B}\}.$ It is not too hard to see that this is a map to nested sets that contain $I$, that preserves the inclusion relation, and that is injective and surjective.\\

\end{proof}

\cite[Proposition 5]{vol} states that if a polytope $Q$ can be obtained from a simple $n$-dimensional polytope $P$ by shaving a face $G$ of dimension $k$ to obtain a new facet $F_0$, then $F_0$ is combinatorially equivalent to $G \times \Delta^{n-k-1}$, where $\Delta^d$ denotes the $d$-dimensional simplex. If $G$ is of dimension $n-2$ then $F_0$ is combinatorially equivalent to $G \times \Delta^{1}$, so that $\gamma(F_0) = \gamma(G)\gamma(\Delta^1) =\gamma(G)$. Hence, in the case that the polytopes are flag nestohedra, using Lemma \ref{equivlem}, we can rewrite \cite[Corollary 1]{vol} as:

\begin{lemma}\cite[Corollary 1]{vol}.
If $\mathcal{B}'$ is a flag building set on $[n]$ obtained from a flag building set $\mathcal{B}$ on $[n]$ by adding an element $I$ then

\begin{align*}\gamma(\mathcal{B}') =& \gamma(\mathcal{B}) + t\gamma(\mathcal{B}'|_I)\gamma(\mathcal{B}'/I)\\
=& \gamma(\mathcal{B}) + t\gamma(\mathcal{B}|_I)\gamma(\mathcal{B}/I).\end{align*}
\label{vollem}
\end{lemma}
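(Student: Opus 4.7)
The plan is to combine the codimension-2 shaving formula for $\gamma$-polynomials of flag nestohedra (the flag nestohedron case of \cite[Corollary 1]{vol}) with Lemma~\ref{equivlem} to interpret the new facet in terms of restriction and contraction. First I fix the shaving picture. Because $\mathcal{B}'$ is flag, the added element $I$ has a binary decomposition in $\mathcal{B}'$ whose two maximal components $D_1,D_2$ must lie in $\mathcal{B}$, since they are proper subsets of $I$ and $I$ is the only element of $\mathcal{B}'\setminus\mathcal{B}$. Since $D_1\cap D_2=\emptyset$ and $D_1\cup D_2=I\notin\mathcal{B}$, the pair $\{D_1,D_2\}$ is a nested set of $\mathcal{B}$, and, as noted in the paragraph after Theorem~\ref{flagbuild}, $P_{\mathcal{B}'}$ is obtained from $P_{\mathcal{B}}$ by shaving the corresponding codimension-2 face $G$; the new facet $F_0$ is the facet of $P_{\mathcal{B}'}$ indexed by $I$.

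Next I compute $\gamma(F_0)$ in two ways. By the consequence of \cite[Proposition 5]{vol} recalled just above, $F_0\cong G\times\Delta^1$, so $\gamma(F_0)=\gamma(G)$. On the other hand, Lemma~\ref{equivlem} applied to the facet $F_0$ of $P_{\mathcal{B}'}$ identifies its face poset with that of $P_{\mathcal{B}'|_I}\times P_{\mathcal{B}'/I}$, so $\gamma(F_0)=\gamma(\mathcal{B}'|_I)\gamma(\mathcal{B}'/I)$. Feeding either expression into the codimension-2 shaving identity $\gamma(P_{\mathcal{B}'})=\gamma(P_{\mathcal{B}})+t\,\gamma(G)$, which is the content of \cite[Corollary 1]{vol} in the flag nestohedron setting, yields the first displayed equality.

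For the second equality I would produce an alternative product description of $G$ in terms of $\mathcal{B}$. The contractions agree on the nose: $\mathcal{B}'/I=\mathcal{B}/I$, since the only element of $\mathcal{B}'\setminus\mathcal{B}$ is $I$ itself and it is discarded when forming the contraction by $I$. The building-set union axiom applied inside $I$ forces the maximal elements of $\mathcal{B}|_I$ to be exactly $D_1$ and $D_2$: a maximal element meeting both would contain $D_1\cup D_2=I\notin\mathcal{B}$, and a maximal element meeting only one $D_i$ is contained in $D_i$ and equals $D_i$ by maximality. Hence $P_{\mathcal{B}|_I}\cong P_{\mathcal{B}|_{D_1}}\times P_{\mathcal{B}|_{D_2}}$, and iterating Lemma~\ref{equivlem} on $G$ (first at the facet of $P_{\mathcal{B}}$ indexed by $D_1$, then inside $P_{\mathcal{B}/D_1}$ at the facet indexed by the image of $D_2$) exhibits $G$ combinatorially as $P_{\mathcal{B}|_{D_1}}\times P_{\mathcal{B}|_{D_2}}\times P_{\mathcal{B}/I}$. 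Taking $\gamma$-polynomials gives $\gamma(G)=\gamma(\mathcal{B}|_I)\gamma(\mathcal{B}/I)$, which together with $\gamma(G)=\gamma(\mathcal{B}'|_I)\gamma(\mathcal{B}'/I)$ yields the second equality. The only step that requires real care is this iterated codimension-2 use of Lemma~\ref{equivlem}: one must keep the three factors straight and verify that successive restriction and contraction by the disjoint subsets $D_1,D_2$ produces $P_{\mathcal{B}/I}$ in the third slot, but this is routine bookkeeping with nested sets of $\mathcal{B}$ containing $\{D_1,D_2\}$ and requires no new ideas.
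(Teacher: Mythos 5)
Your argument is correct, and the first equality is handled exactly as in the paper (Volodin's shaving formula for the codimension-2 face $G$, the fact that the new facet $F_0$ is $G\times\Delta^1$, and Lemma~\ref{equivlem} to read $\gamma(F_0)$ as $\gamma(\mathcal{B}'|_I)\gamma(\mathcal{B}'/I)$). For the second equality, however, you take a genuinely different route. The paper does not touch the face $G$ again; it simply notes $\mathcal{B}'/I=\mathcal{B}/I$ and then invokes Erokhovets' substitution construction (Lemma~\ref{combeq}) to identify $\mathcal{B}'|_I$ with $\mathcal{D}[\mathcal{B}|_{D_1},\mathcal{B}|_{D_2}]$ where $\mathcal{D}=\{\{1\},\{2\},[2]\}$, so that $\gamma(\mathcal{B}'|_I)=\gamma(\mathcal{D})\gamma(\mathcal{B}|_{D_1})\gamma(\mathcal{B}|_{D_2})=\gamma(\mathcal{B}|_I)$. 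You instead compute $\gamma(G)$ directly by iterating Lemma~\ref{equivlem} at the codimension-2 face: restrict at $D_1$, then at the image of $D_2$ inside $P_{\mathcal{B}/D_1}$, to exhibit $G$ combinatorially as $P_{\mathcal{B}|_{D_1}}\times P_{\mathcal{B}|_{D_2}}\times P_{\mathcal{B}/I}$, then plug $\gamma(G)=\gamma(\mathcal{B}|_I)\gamma(\mathcal{B}/I)$ into the shaving identity. Both arguments ultimately rest on the same structural fact, which you correctly isolate: since $D_1,D_2\in\mathcal{B}$ are disjoint with union $I\notin\mathcal{B}$, every element of $\mathcal{B}|_I$ lies in $D_1$ or $D_2$, so $\mathcal{B}|_I$ has exactly the two maximal elements $D_1,D_2$. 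The paper's route is a little slicker because Lemma~\ref{combeq} packages the product decomposition in one step; your route is more geometric and, pleasingly, only uses Lemma~\ref{equivlem}, at the cost of the "bookkeeping" you flag — namely checking that $(\mathcal{B}/D_1)|_{D_2}=\mathcal{B}|_{D_2}$ and $(\mathcal{B}/D_1)/D_2=\mathcal{B}/I$, both of which do hold under the disjointness of $D_1,D_2$ together with the fact that every element of $\mathcal{B}|_I$ lies in one of them.
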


\begin{proof}
The first identity is a direct consequence of the preceding discussion. From the definition of the contraction of a building set we have that $\mathcal{B}'/I = \mathcal{B}/I$ so that $\gamma(\mathcal{B}'/I) = \gamma(\mathcal{B}/I)$. Let $D_1,D_2$ be the maximal components of $I$ in the decomposition of $I$ in $\mathcal{B}'$. They are unique since $I \not \in \mathcal{B}$. Using Lemma \ref{combeq} below we have that $\mathcal{B}'|_I = \mathcal{D}[\mathcal{B}|_{D_1},\mathcal{B}|_{D_2}]$ where $\mathcal{D}$ is the building set $\{\{1\},\{2\},[2]\}$. Hence $$\gamma(\mathcal{B}'|_I) = \gamma(\mathcal{D})\gamma(\mathcal{B}|_{D_1})\gamma(\mathcal{B}|_{D_2}) = \gamma(\mathcal{D})\gamma(\mathcal{B}|_I) = \gamma(\mathcal{B}|_I).$$
\end{proof}

\noindent Note that if $\mathcal{B}$ is a flag building set on $[n]$ and $I \in \mathcal{B}$, then $\mathcal{B}/I$ and $\mathcal{B}|_I$ are flag building sets. This is obvious for $\mathcal{B}|_I$. For the claim about $\mathcal{B}/I$, we let $D \in \mathcal{B}/I$. Then if $D \in \mathcal{B}$ there exist two elements $D_1,D_2$ in $\mathcal{B}/I$ such that $D_1 \cap D_2 = \emptyset$ and $D_1 \cup D_2 = I$. If $D \not \in \mathcal{B}$ then $D \cup I \in \mathcal{B}$, and since $I \subseteq I \cup D$, by Lemma \ref{decomp}, $I$ is in a decomposition $\mathcal{D}$ of $I \cup D$ and this implies there are two elements $D_1, D_2 \in \mathcal{D}$ such that $D_1 \cap D_2 = \emptyset$, $D_1 \cup D_2 = D \cup I$, and $I$ is a proper subset of either $D_1$ or $D_2$. Let $\overline{D_i}$ denote the image of $D_i$ in the contraction. Then $\overline{ D_1} \cap \overline{ D_2} = \emptyset$ and $\overline{D_1} \cup \overline{D_2} = D$.\\

Using Theorem \ref{flagbuild} and Lemma \ref{vollem} \cite{vol} shows the following two Theorems. Their proof uses the inductive hypothesis that both $\gamma(\mathcal{B}'|_I)$ and $\gamma(\mathcal{B}'/I)$ of Lemma \ref{vollem} are such that $\gamma(\mathcal{B}'|_I) \ge 0$ and $\gamma(\mathcal{B}'/I) \ge 0$.\\

\begin{thm}\cite[Theorem 2]{vol}.
For any flag nestohedron $P_{\mathcal{B}}$ we have $$\gamma(\mathcal{B}) \ge 0.$$ \label{posthm}
\end{thm}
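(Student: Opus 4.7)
The plan is to induct on the size $n$ of the ground set $S$. For $n=1$ the only flag building set is $\{\{1\}\}$, whose nestohedron is a point, so $\gamma = 1 \ge 0$. For the inductive step, first reduce to the connected case: if $\mathcal{B}_{max} = \{S_1,\ldots,S_k\}$ with $k \ge 2$, then $P_\mathcal{B}$ is combinatorially the product of the polytopes $P_{\mathcal{B}|_{S_i}}$, so $\gamma(\mathcal{B}) = \prod_i \gamma(\mathcal{B}|_{S_i})$; each factor is the $\gamma$-polynomial of a connected flag building set on a set of size strictly less than $n$, and is therefore non-negative by the outer induction.

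Assume now that $\mathcal{B}$ is connected on $[n]$. Invoke Theorem \ref{flagbuild} to choose any minimal flag building set $\mathcal{B}_0 \subseteq \mathcal{B}$ and a chain
\[
\mathcal{B}_0 \subsetneq \mathcal{B}_1 \subsetneq \cdots \subsetneq \mathcal{B}_m = \mathcal{B}
\]
in which every $\mathcal{B}_j$ is a connected flag building set on $[n]$ and $\mathcal{B}_{j+1} = \mathcal{B}_j \cup \{I_j\}$ for a single element $I_j$. Since $\gamma(\mathcal{B}_0) = 1$ by the properties of minimal flag building sets, it suffices to show that the step $\mathcal{B}_j \rightsquigarrow \mathcal{B}_{j+1}$ preserves non-negativity. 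By Lemma \ref{vollem},
\[
\gamma(\mathcal{B}_{j+1}) = \gamma(\mathcal{B}_j) + t\,\gamma(\mathcal{B}_j|_{I_j})\,\gamma(\mathcal{B}_j/I_j),
\]
so if $\gamma(\mathcal{B}_j) \ge 0$ and both factors in the added term are non-negative, then $\gamma(\mathcal{B}_{j+1}) \ge 0$.

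The point to verify is that the outer inductive hypothesis applies to those two factors. Because $\mathcal{B}_j$ is connected we already have $[n] \in \mathcal{B}_j$, so the added element $I_j$ is a proper non-singleton subset of $[n]$, and consequently both $|I_j|$ and $|[n]\setminus I_j|$ are strictly less than $n$. The remarks following Lemma \ref{vollem} show that $\mathcal{B}_j|_{I_j}$ and $\mathcal{B}_j/I_j$ are themselves flag building sets, so by the outer induction $\gamma(\mathcal{B}_j|_{I_j}) \ge 0$ and $\gamma(\mathcal{B}_j/I_j) \ge 0$, completing the inductive step.

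The argument is essentially a routine shaving-and-induction bookkeeping exercise in which the heavy combinatorial lifting has already been done by Theorem \ref{flagbuild} and Lemma \ref{vollem}. The only point demanding genuine care, and thus the main potential obstacle, is checking that $1 < |I_j| < n$ at every step so that both $\mathcal{B}_j|_{I_j}$ and $\mathcal{B}_j/I_j$ are strictly smaller flag building sets; this is exactly what the connectedness of $\mathcal{B}_j$ together with the fact that $\mathcal{B}_j$ already contains all singletons guarantees.
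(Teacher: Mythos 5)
Your proof is correct and follows exactly the approach the paper attributes to Volodin: reduce to the connected case by the product decomposition, build $\mathcal{B}$ from a minimal flag building set by a chain of single-element additions (Theorem \ref{flagbuild}), and use Lemma \ref{vollem} together with the outer induction on $n$ applied to $\gamma(\mathcal{B}_j|_{I_j})$ and $\gamma(\mathcal{B}_j/I_j)$ to see that each shaving step adds a nonnegative term. The only cosmetic point is that the remark after Lemma \ref{vollem} literally treats $\mathcal{B}/I$ and $\mathcal{B}|_I$ for $I\in\mathcal{B}$, so it is cleanest to apply it to $\mathcal{B}_{j+1}$ (in which $I_j$ lies) and then use the equalities $\gamma(\mathcal{B}_{j+1}/I_j)=\gamma(\mathcal{B}_j/I_j)$ and $\gamma(\mathcal{B}_{j+1}|_{I_j})=\gamma(\mathcal{B}_j|_{I_j})$ from the lemma itself; this is exactly why the lemma records both forms of the identity.
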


\begin{thm}\cite[Theorem 3]{vol} \cite[Theorem 1.1]{bv}.
If $\mathcal{B}$ and $\mathcal{B}'$ are connected flag building sets on $[n]$ and $\mathcal{B} \subseteq \mathcal{B}'$, then $\gamma(\mathcal{B}) \le \gamma(\mathcal{B}')$. \label{volthm}
\end{thm}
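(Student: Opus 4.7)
The plan is to reduce the statement to a telescoping sum of single-element additions and then apply Lemma \ref{vollem} together with Theorem \ref{posthm} to see that each step is coefficientwise non-negative.

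First I would invoke Theorem \ref{flagbuild} to choose a chain of connected flag building sets
\[
\mathcal{B} = \mathcal{B}_0 \subsetneq \mathcal{B}_1 \subsetneq \cdots \subsetneq \mathcal{B}_m = \mathcal{B}'
\]
on $[n]$ such that at each step $\mathcal{B}_{j+1} = \mathcal{B}_j \cup \{I_j\}$ for some $I_j \in \mathcal{B}' \setminus \mathcal{B}$. This converts the inclusion $\mathcal{B} \subseteq \mathcal{B}'$ into a sequence of single-element additions, each preserving flagness, so it suffices to prove $\gamma(\mathcal{B}_j) \le \gamma(\mathcal{B}_{j+1})$ for each $j$.

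Next, I would apply Lemma \ref{vollem} at each step. That lemma gives
\[
\gamma(\mathcal{B}_{j+1}) - \gamma(\mathcal{B}_j) = t\,\gamma(\mathcal{B}_j|_{I_j})\,\gamma(\mathcal{B}_j/I_j).
\]
The building sets $\mathcal{B}_j|_{I_j}$ and $\mathcal{B}_j/I_j$ are flag by the observation made immediately after Lemma \ref{vollem} (restrictions of flag building sets are manifestly flag, and contractions are flag by the argument given there using Lemma \ref{decomp}). By Theorem \ref{posthm}, both $\gamma(\mathcal{B}_j|_{I_j})$ and $\gamma(\mathcal{B}_j/I_j)$ have non-negative coefficients, hence so does their product, and multiplication by $t$ preserves non-negativity. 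Therefore $\gamma(\mathcal{B}_{j+1}) - \gamma(\mathcal{B}_j) \ge 0$ coefficientwise, and summing over $j = 0, \ldots, m-1$ gives $\gamma(\mathcal{B}') - \gamma(\mathcal{B}) \ge 0$.

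The only delicate point is the interdependence of Theorems \ref{posthm} and \ref{volthm}: the proof of \ref{posthm} itself rests on Lemma \ref{vollem} and the inductive non-negativity of $\gamma(\mathcal{B}|_I)$ and $\gamma(\mathcal{B}/I)$ on smaller ground sets. The expected main obstacle is therefore to make the induction simultaneous in a well-founded way; I would induct on $|[n]| + |\mathcal{B}|$, so that at every application of Lemma \ref{vollem} the factors $\gamma(\mathcal{B}|_I)$ and $\gamma(\mathcal{B}/I)$ (which live on smaller ground sets) are already known to be non-negative by the inductive hypothesis, while the chain produced by Theorem \ref{flagbuild} keeps the ground set fixed and so the needed instances of \ref{posthm} are available at each step. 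Once the induction is correctly organised, the argument reduces to the two-line telescoping computation above.
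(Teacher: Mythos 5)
Your proposal matches the paper's approach exactly: the paper explicitly notes that Theorems \ref{posthm} and \ref{volthm} are proved by combining Theorem \ref{flagbuild} with Lemma \ref{vollem}, using the inductive hypothesis that $\gamma(\mathcal{B}'|_I) \ge 0$ and $\gamma(\mathcal{B}'/I) \ge 0$, and your well-foundedness observation correctly identifies the point the cited sources handle implicitly. No differences in substance.
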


The following construction is due to Erokhovets \cite{Er}. Let $[i,j]$ denote the interval $\{i,i+1,...,j\}$. Let $\mathcal{B},\mathcal{B}_1,\mathcal{B}_2,...,\mathcal{B}_n$ be connected building sets on $[n], [k_1],...,[k_n]$ respectively, and let $\overline {[k_i]}$ denote the interval $[\sum_{j=1}^{i-1}k_j+1,\sum_{j=1}^{i}k_j]$. Define the connected building set $\mathcal{B}[\mathcal{B}_1,\mathcal{B}_2,...,\mathcal{B}_n]$ on $[k_1 + k_2 + \cdots +k_n]$, where $\mathcal{B}|_{\overline {[k_i]}}$ is equivalent to $\mathcal{B}_i$, and add the elements $\overline{[k_{i_1}] } \cup \overline{ [k_{i_2}]} \cup \cdots  \cup \overline{ [k_{i_m}]}$ for every $\{i_1,i_2,....,i_m\} \in \mathcal{B}.$

\begin{lemma}
\cite{Er}. Let $\mathcal{B},\mathcal{B}_1,...,\mathcal{B}_n$ be connected building sets on $[n],[k_1],...,[k_n]$ respectively. Let $\mathcal{B}' =\mathcal{B}[\mathcal{B}_1,...,\mathcal{B}_n]$. Then
$P_{\mathcal{B}'}$ is combinatorially equivalent to $P_\mathcal{B} \times \mathcal{P}_{B_1} \times \cdots \times P_{\mathcal{B}_n}$. \label{combeq}
\end{lemma}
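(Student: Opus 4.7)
The plan is to deduce the combinatorial equivalence from an isomorphism of the dual simplicial complexes. By Theorem~\ref{facethm} the polar dual of any nestohedron has boundary complex equal to the corresponding nested set complex, and it is standard that the boundary complex of the polar dual of a product of simple polytopes is the simplicial join of the individual polar-dual boundary complexes. Hence it suffices to establish
\[
\Delta_{\mathcal{B}'} \;\cong\; \Delta_\mathcal{B} * \Delta_{\mathcal{B}_1} * \cdots * \Delta_{\mathcal{B}_n},
\]
where $*$ denotes the simplicial join.

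The first step is to describe the elements of $\mathcal{B}'$ explicitly. From the construction, every $J \in \mathcal{B}'$ is either contained in one block $\overline{[k_i]}$, in which case it corresponds uniquely to an element of $\mathcal{B}_i$ under $\mathcal{B}'|_{\overline{[k_i]}} \cong \mathcal{B}_i$, or $J$ is a disjoint union $\overline{[k_{i_1}]} \cup \cdots \cup \overline{[k_{i_m}]}$ of whole blocks with $\{i_1,\ldots,i_m\} \in \mathcal{B}$. Using this I would set up the vertex-set bijection
\[
\varphi\colon (\mathcal{B} \setminus \{[n]\}) \sqcup \bigsqcup_{i=1}^n (\mathcal{B}_i \setminus \{[k_i]\}) \;\longrightarrow\; \mathcal{B}' \setminus \{\overline{[k_1+\cdots+k_n]}\},
\]
sending $X \in \mathcal{B}$ to $\bigcup_{i \in X}\overline{[k_i]}$ and sending $I \in \mathcal{B}_i$ to its translate inside $\overline{[k_i]}$. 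The two pieces have disjoint images (unions of whole blocks on the one hand, proper subsets of a single block on the other) that together exhaust the non-maximal elements of $\mathcal{B}'$; the blocks $\overline{[k_i]}$ themselves are accounted for by the singletons $\{i\} \in \mathcal{B}$.

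The main step is to verify that under $\varphi$ nested sets correspond to tuples of nested sets. Given $N$ nested in $\mathcal{B}'$, write $N = \varphi(N_0) \sqcup \bigsqcup_i \varphi(N_i)$ with $N_0 \subseteq \mathcal{B} \setminus \{[n]\}$ and $N_i \subseteq \mathcal{B}_i \setminus \{[k_i]\}$. The nested-or-disjoint axiom transfers across $\varphi$ essentially for free, since any pair drawn from different $\varphi(N_i)$'s or from a mix of $\varphi(N_0)$ with some $\varphi(N_i)$ is automatically either nested or disjoint by the block structure, so the axiom on $N$ reduces to the separate axioms on $N_0,N_1,\ldots,N_n$. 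For the forbidden-union axiom, the key observation is that a disjoint collection $J_1,\ldots,J_m$ of elements of $\mathcal{B}'$ whose union also lies in $\mathcal{B}'$ must either be contained entirely within a single block (then detected by the nestedness of the corresponding $N_i$) or consist entirely of unions of whole blocks (then detected by the nestedness of $N_0$); a genuinely mixed collection cannot have union a full union of blocks, because any element coming from some $\varphi(N_i)$ is by construction a proper subset of $\overline{[k_i]}$.

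The main obstacle is this case analysis for the forbidden-union axiom, specifically ruling out disjoint collections that mix proper in-block elements with whole-block elements but still sum to something in $\mathcal{B}'$. Once that is handled, $\varphi$ and its inverse preserve both nested set axioms, yielding the claimed isomorphism of simplicial complexes and hence the combinatorial equivalence of $P_{\mathcal{B}'}$ with $P_\mathcal{B} \times P_{\mathcal{B}_1} \times \cdots \times P_{\mathcal{B}_n}$.
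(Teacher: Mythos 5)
The paper does not actually prove this lemma; it cites it from \cite{Er} and uses it as a black box, so there is no in-paper proof to compare against. Assessed on its own, your argument is correct and complete. The reduction via Theorem~\ref{facethm} and the standard fact that $\partial(P_1 \times P_2)^{\ast} \cong \partial P_1^{\ast} \ast \partial P_2^{\ast}$ is sound, and your vertex bijection $\varphi$ is well-defined: elements of $\mathcal{B}'$ split cleanly into unions of whole blocks (in bijection with $\mathcal{B}$, with singletons $\{i\}$ accounting for the blocks $\overline{[k_i]}$ themselves) and proper subsets of a single block (in bijection with $\bigsqcup_i (\mathcal{B}_i \setminus \{[k_i]\})$).

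The case analysis you flag as the main obstacle does close, and it is worth recording why. A disjoint family in $\mathcal{B}'$ whose union lies in $\mathcal{B}'$ must be homogeneous under your dichotomy: a union-of-blocks element meets $\overline{[k_p]}$ only if it contains it, so it cannot be disjoint from a proper nonempty subset of $\overline{[k_p]}$ and still have the union confined to that block; and if the family touches at least two blocks with at least one of them met properly, the union has proper nonempty intersection with some block, hence is neither a union of whole blocks nor contained in a single block, so is not in $\mathcal{B}'$. In the two homogeneous cases the forbidden-union condition transfers verbatim to $\mathcal{B}_p$ (noting that a union of proper subsets of $\overline{[k_p]}$ equal to the whole block is excluded because $[k_p] \in \mathcal{B}_p$ by connectedness) or to $\mathcal{B}$, respectively. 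So $\varphi$ identifies nested sets with tuples of nested sets, giving the join isomorphism and hence the combinatorial equivalence.
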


\end{section}
\begin{section}{Tree shifts}
Our goal of this section is to prove Theorem \ref{bigthm}.\\

We will now introduce the tree shift operation mentioned in Theorem \ref{bigthm}. We call a degree one vertex of an arbitrary  graph a leaf (this is the standard name for a degree one vertex of a tree).  \\

Let $G$ be a connected graph with $n$ vertices labelled 1 to $n$, with the following properties and extra data (for a vertex $v$ we also denote the set $\{v\}$ by $v$):
\begin{enumerate}
\item $G$ has a leaf $l$ and the nearest vertex to $l$ of degree greater than 2 is labelled $c$. The vertices in the path from $c$ to $l$ are labelled $c,c_1,c_2,...,c_k,l$.\\
\item There exists a set of vertices $F$ of $G-\{c,c_1,...,l\}$ such that $F \cup c$ is a subgraph of $G$ that forms a tree, and such that there is no vertex of $G-(c \cup F)$ that is connected to a vertex in $F$.\\
\item $G - (F \cup \{c , c_1, c_2,...,c_k, l\}) \ne \emptyset$, and is denoted $E$.\\
\end{enumerate}

\noindent A \emph{tree shift} is the following move applied to a graph with the properties described. Informally, we remove $F$ and reattach $F$ to $l$. More formally, we remove any edge $(v,c)$ where $v \in F$, and replace it with the edge $(v,l)$ (see Figure \ref{treshift}). \\

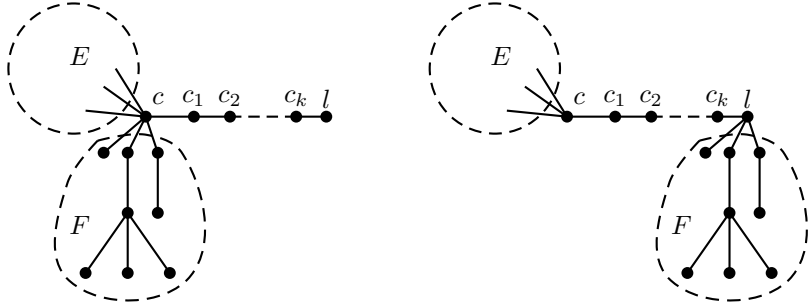
\begin{figure}[H]
\caption{A graph $G$ followed by the tree shift of $G$.}
\label{treshift}

\[
\psset{unit=0.8cm}
\begin{pspicture}(4,0)(10,4.5)
\rput(5.9,4){$E$}
\rput(7.2,3.3){$c$}\qdisk(7,3){2.25pt}
\psline(7,3)(6.5,3.8)
\psline(7,3)(6.3,3.5)
\psline(7,3)(6, 3.1)
\psline(7,3)(8.4,3)\psline[linestyle=dashed](8.4,3)(9.5,3)\psline(9.5,3)(10,3)
\rput(7.8,3.3){$c_1$}\rput(8.4,3.3){$c_2$}\rput(9.5,3.3){$c_k$}\rput(10,3.3){$l$}
\qdisk(7.8,3){2.25pt}\qdisk(8.4,3){2.25pt}\qdisk(9.5,3){2.25pt}\qdisk(10,3){2.25pt}
\pscircle[linestyle=dashed](5.8,3.8){1.1}
\psline(7,3)(6.3,2.4)\qdisk(6.3,2.4){2.25pt}
\psline(7,3)(6.7,2.4)(6.7,1.4)(6.7,.4)\qdisk(6.7,2.4){2.25pt}\qdisk(6.7,1.4){2.25pt}\qdisk(6.7,.4){2.25pt}
\psline(7,3)(7.2,2.4)(7.2,1.4)\qdisk(7.2,2.4){2.25pt}\qdisk(7.2,1.4){2.25pt}
\psline(6.7,1.4)(6,.4)\qdisk(6,.4){2.25pt}
\psline(6.7,1.4)(7.4,.4)\qdisk(7.4,.4){2.25pt}
\pscurve[linestyle=dashed](6.2,2.6)(5.7,1.9)(5.7,.3)(7.7,.3)(7.6,2.4)(7,2.7)(6.2,2.6)\rput(5.9,1.2){$F$}
\end{pspicture}
\begin{pspicture}(3,0)(14.2,4.5)
\rput(5.9,4){$E$}
\rput(7.2,3.3){$c$}\qdisk(7,3){2.25pt}
\psline(7,3)(6.5,3.8)
\psline(7,3)(6.3,3.5)
\psline(7,3)(6, 3.1)
\psline(7,3)(8.4,3)\psline[linestyle=dashed](8.4,3)(9.5,3)\psline(9.5,3)(10,3)
\rput(7.8,3.3){$c_1$}\rput(8.4,3.3){$c_2$}\rput(9.5,3.3){$c_k$}\rput(10,3.3){$l$}
\qdisk(7.8,3){2.25pt}\qdisk(8.4,3){2.25pt}\qdisk(9.5,3){2.25pt}\qdisk(10,3){2.25pt}
\pscircle[linestyle=dashed](5.8,3.8){1.1}

\psline(10,3)(9.3,2.4)\qdisk(9.3,2.4){2.25pt}
\psline(10,3)(9.7,2.4)(9.7,1.4)(9.7,.4)\qdisk(9.7,2.4){2.25pt}\qdisk(9.7,1.4){2.25pt}\qdisk(9.7,.4){2.25pt}
\psline(10,3)(10.2,2.4)(10.2,1.4)\qdisk(10.2,2.4){2.25pt}\qdisk(10.2,1.4){2.25pt}
\psline(9.7,1.4)(9,.4)\qdisk(9,.4){2.25pt}
\psline(9.7,1.4)(10.4,.4)\qdisk(10.4,.4){2.25pt}
\pscurve[linestyle=dashed](9.2,2.6)(8.7,1.9)(8.7,.3)(10.7,.3)(10.6,2.4)(10,2.7)(9.2,2.6)\rput(8.9,1.2){$F$}

\end{pspicture}
\]
\end{figure}
\begin{thm}
Let $G$ be a connected graph, and let $G'$ be a resulting tree shift of $G$. Then $\gamma(\mathcal{B}(G')) \le \gamma(\mathcal{B}(G))$. \label{bigthm}
\end{thm}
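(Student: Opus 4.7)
The plan is to exhibit the common flag building set $\mathcal{B}_0 := \mathcal{B}(G)\cap\mathcal{B}(G')$ below both $\mathcal{B}(G)$ and $\mathcal{B}(G')$, apply Lemma~\ref{vollem} along chains of flag building sets supplied by Theorem~\ref{flagbuild}, and compare the resulting sums of contributions via an injection between the ``extra'' elements $\mathcal{A} := \mathcal{B}(G)\setminus\mathcal{B}_0$ and $\mathcal{A}' := \mathcal{B}(G')\setminus\mathcal{B}_0$.

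First I would check that $\mathcal{B}_0$ is a connected flag building set on $V(G)$: union-closure and connectedness are immediate, and flagness is verified via Lemma~\ref{flaglem}. A non-singleton $I\in\mathcal{B}_0$ lying entirely in $F$ or in $V(G)\setminus F$ admits a binary decomposition coming from flagness of the relevant graphical building set; for mixed $I$ one first shows that $I$ must contain $c$, $l$, and the full path $c,c_1,\ldots,c_k,l$, and then peels off a leaf $a$ of the subtree $(I\cap F)\cup\{c\}$ lying in $F$, so that $\{a\}\sqcup(I\setminus\{a\})$ is a binary decomposition in $\mathcal{B}_0$. An explicit case analysis further shows that $\mathcal{A}$ consists of the sets $A\cup B$ with $\emptyset\neq A\subseteq F$, $A\cup\{c\}$ a subtree of $F\cup\{c\}$, and $B\subseteq V(G)\setminus F$ connected in $G-F$ with $c\in B$ and $l\notin B$; while $\mathcal{A}'$ consists of the sets $A\cup\{l,c_k,\ldots,c_{k-j+1}\}$ with $A$ as above and $0\le j\le k$. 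The restriction on the $V(G)\setminus F$-part of $\mathcal{A}'$ arises because $c_1,\ldots,c_k$ all have degree two in $G$, forcing any connected subset of $G-F$ containing $l$ but not $c$ to be an initial segment of the path from $l$.

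Define the injection $\phi\colon \mathcal{A}'\hookrightarrow\mathcal{A}$ by
\[
\phi\bigl(A\cup\{l,c_k,\ldots,c_{k-j+1}\}\bigr) := A\cup\{c,c_1,\ldots,c_j\}.
\]
Theorem~\ref{flagbuild} then provides flag chains $\mathcal{B}_0\subsetneq\mathcal{B}_1\subsetneq\cdots\subsetneq\mathcal{B}(G)$ and $\mathcal{B}_0\subsetneq\mathcal{B}'_1\subsetneq\cdots\subsetneq\mathcal{B}(G')$, each adding one element at a time, and Lemma~\ref{vollem} writes the two differences $\gamma(\mathcal{B}(G))-\gamma(\mathcal{B}_0)$ and $\gamma(\mathcal{B}(G'))-\gamma(\mathcal{B}_0)$ as sums of contributions of the form $t\,\gamma(\mathcal{B}_I|_I)\gamma(\mathcal{B}_I/I)$. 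With suitably compatible orderings of the two chains, the contribution of each $I'\in\mathcal{A}'$ is to be shown coefficient-wise bounded by that of $\phi(I')$: the restriction factors agree because the induced subgraphs $G'|_{I'}$ and $G|_{\phi(I')}$ are isomorphic trees (the subtree $A$ attached at an endpoint of a path of length $j+1$), and for the contraction factors one shows that $\mathcal{B}(G'/I')$ embeds as a sub-building set of $\mathcal{B}(G/\phi(I'))$ via a suitable bijection between $V(G)\setminus I'$ and $V(G)\setminus\phi(I')$, so that Theorem~\ref{volthm} yields the desired $\gamma$-inequality. The remaining contributions from $\mathcal{A}\setminus\phi(\mathcal{A}')$ are nonnegative by Theorem~\ref{posthm}, completing the argument.

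The hardest step will be the contraction comparison: constructing the vertex-set bijection that exhibits $\mathcal{B}(G'/I')$ as a sub-building set of $\mathcal{B}(G/\phi(I'))$ in general. The two boundary cliques created by contracting differ in structure --- the one produced from $\phi(I')$ in $G$ includes the $E$-neighbors of $c$ because $c\in\phi(I')$, whereas in $G'/I'$ these appear instead as original edges to $c\in V(G)\setminus I'$ --- so the embedding must trade clique edges on one side for original edges on the other, and verifying this in full generality requires careful tracking of how the path of $c_i$'s and the boundary cliques interact with the subtree structure of $F$ and with $E$.
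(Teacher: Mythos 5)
Your global strategy (pick a common flag building set below both $\mathcal{B}(G)$ and $\mathcal{B}(G')$, run Theorem~\ref{flagbuild} and Lemma~\ref{vollem} to express the two $\gamma$-differences as sums of per-step increments, and compare via an injection) is the same architecture the paper uses, and your injection $\phi$ agrees with the paper's correspondence $I'_m=D\cup\{l,c_k,\dots,c_{k-\alpha+1}\}\mapsto I_m=D\cup\{c,c_1,\dots,c_\alpha\}$. Your choice of base $\mathcal{B}_0=\mathcal{B}(G)\cap\mathcal{B}(G')$ is a reasonable variant of the paper's $\overline{\mathcal{B}}=\mathcal{B}(G-v)\cup\{\{v\},[n]\}$, and neatly eliminates the paper's second case (where $I'_m\cap E\ne\emptyset$). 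Your identification of $\mathcal{A}$, $\mathcal{A}'$ and the argument that restrictions $G'|_{I'}$ and $G|_{\phi(I')}$ are isomorphic trees are sound.

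However, there is a genuine gap at the step you yourself flag as hardest. The claim that $\mathcal{B}(G'/I')$ embeds as a sub-building set of $\mathcal{B}(G/\phi(I'))$ under some vertex bijection --- i.e., that $G'/I'$ is isomorphic to a subgraph of $G/\phi(I')$ --- is false in general, so Theorem~\ref{volthm} alone cannot close the argument. Here is a concrete counterexample. Let $V(G)=\{e_1,e_2,c,f_0,f_1,f_2,f_3,c_1,l\}$ with $E=\{e_1,e_2\}$, $F=\{f_0,f_1,f_2,f_3\}$, chain $c,c_1,l$, and edges $e_1e_2$, $e_1c$, $e_2c$, $f_0c$, $f_0f_1$, $f_1f_2$, $f_1f_3$, $cc_1$, $c_1l$. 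Take $I'=\{f_0,l\}$, $\phi(I')=\{f_0,c\}$. Then $G/\phi(I')$ is a $K_4$ on $\{e_1,e_2,c_1,f_1\}$ with pendants $l$ (at $c_1$), $f_2$ and $f_3$ (at $f_1$); every triangle is a face of that $K_4$. But $G'/I'$ is a triangle $e_1e_2c$ with $c$-$c_1$-$f_1$ a path of length two and $f_2,f_3$ pendants at $f_1$: so it contains a triangle with a vertex at distance two from it that has two further pendant neighbours. No face of the $K_4$ in $G/\phi(I')$ has such a configuration reachable from it (from any chosen triangle vertex $d$, walking two steps out of the $K_4$ only ever lands on a degree-one vertex), so no subgraph embedding exists. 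Hence $\mathcal{B}(G'/I')$ is not a sub-building set of $\mathcal{B}(G/\phi(I'))$ under any relabelling, and your intended invocation of Theorem~\ref{volthm} fails at this step.

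The paper handles this point quite differently: it sets up an induction on the number of vertices, and shows the contraction comparison $\gamma(\mathcal{B}(G'/I'))\le\gamma(\mathcal{B}(G/\phi(I')))$ by first deleting some of the clique edges of $G/\phi(I')$ (Theorem~\ref{volthm}) and then applying a \emph{tree shift on a smaller graph}, to which the inductive hypothesis applies. Your proposal has no induction anywhere, so even if you abandoned the embedding claim you would have nothing to fall back on. To repair the argument you should (i) induct on $|V(G)|$, and (ii) for the contraction comparison, follow the paper's two-stage reduction (edge removal, then tree shift on the contracted graph) rather than seeking a direct sub-building-set inclusion. A secondary point worth attending to: with base $\mathcal{B}_0$ you still need to justify that the restriction factors appearing in Lemma~\ref{vollem} at the moment each $I'$ is added really do coincide on the two sides; this depends on the order in which Theorem~\ref{flagbuild} adds elements, and is not automatic from the isomorphism of the induced subgraphs alone.
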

\begin{proof}
We suppose that $G$ has $n$ vertices, and we label $G$ as in the definition of a tree shift. We assume by induction that for any connected graph $H$ with less than $n$ vertices, if $H'$ is a tree shift of $H$, then $\gamma(\mathcal{B}(H')) \le \gamma(\mathcal{B}(H))$. When $n <4$ no tree shift is possible so the result is vacuously true. Let  $v$ be a leaf of $G$ (and $G'$) contained in $F$. The set $\overline{\mathcal{B}}:=\mathcal{B}(G-v) \cup  \{\{v\} , [n]\}$ is a flag building set contained in $\mathcal{B}(G)$ and $\overline{\mathcal{B}'}=\mathcal{B}(G'-v) \cup \{ \{v\} , [n]\}$ is a flag building set contained in $\mathcal{B}(G')$, hence, by Theorem \ref{flagbuild} we can add elements to $\overline{\mathcal{B}}$ to obtain $\mathcal{B}(G)$ so that at each step the set obtained is a flag building set. Similarly, we can add elements to $\overline{\mathcal{B}'}$ to obtain $\mathcal{B}(G')$ so that at each step the set we obtain is a flag building set. By Lemma \ref{vollem} and Theorem \ref{posthm} each time an element is added to these flag building sets the $\gamma$-polynomial of the resulting building set increases. We will construct an injection $$\mathcal{B}(G') - \overline{\mathcal{B}}' \rightarrow \mathcal{B}(G) - \overline{\mathcal{B}}$$ $$I' \mapsto I,$$ and show that the increase in the $\gamma$-polynomial when adding $I'$ is less than or equal to the increase when adding $I$. This shows that

\begin{equation}\gamma(\mathcal{B}(G'))- \gamma(\overline{\mathcal{B}'}) \le  \gamma(\mathcal{B}(G)) - \gamma(\overline{\mathcal{B}}).\label{lemequn}\end{equation}

\noindent By Lemma \ref{combeq} $$\gamma(\overline{\mathcal{B}}) = \gamma(\mathcal{B}(G-v))$$ and

$$\gamma(\overline{\mathcal{B}'}) = \gamma(\mathcal{B}(G'-v)),$$ so that Equation \ref{lemequn} becomes

$$\gamma(\mathcal{B}(G'))- \gamma(\mathcal{B}(G'-v)) \le  \gamma(\mathcal{B}(G)) - \gamma(\mathcal{B}(G-v)).$$ By induction, since $G'-v$ is a tree shift of $G-v$, or is equal to $G-v$, we have

$$\gamma(\mathcal{B}(G'-v)) \le \gamma(\mathcal{B}(G-v))$$ so that

$$\gamma(\mathcal{B}(G')) \le \gamma(\mathcal{B}(G)).$$

We will now construct the injection. Suppose that $I_1',I_2',...,I_k'$ are the building set elements that are added to $\overline{\mathcal{B}}'$ to obtain $\mathcal{B}(G')$ (in order) and $I_j' \subseteq I_i'$. Then $j > i$, since $I_j' \cap (I_i' - \{v\}) \ne \emptyset$ and $I_j' \cup (I_i' - \{v\}) = I_i'$ which implies that when $I_j'$ is in the building set $I_i'$ must be too. Similarly, no subset of an element is added before it when we are adding sets to obtain $\mathcal{B}(G)$.\\

Let $\mathcal{B}_m'$ be the building set $\overline {\mathcal{B}}' \cup \{I_{1}', I_{2}',..., I_{m}'\}$. By Lemma \ref{vollem} we have that $$\gamma(\mathcal{B}_m') -\gamma(\mathcal{B}_{m-1}') = t\gamma(\mathcal{B}_{m-1}'|_{I_{m}'})\gamma(\mathcal{B}_{m-1}'/I_m').$$ Suppose that $I_m' \cap E = \emptyset$, so that $I_m' = D \cup \{l,c_k, ..., c_{k-\alpha+1}\}$ for some $D \subseteq F$ and let $I_m = D \cup \{c,c_1,...,c_\alpha\}$, one of the elements that is added to $\overline{ \mathcal{B}}$ to obtain $\mathcal{B}(G)$. Note that we may have $c_{k-\alpha + 1} = c$ and $c_\alpha = l$. Note also that $I_m$ is not necessarily the $m$th element that is added to $\overline{\mathcal{B}}$ (see Figure \ref{setbm}). \\

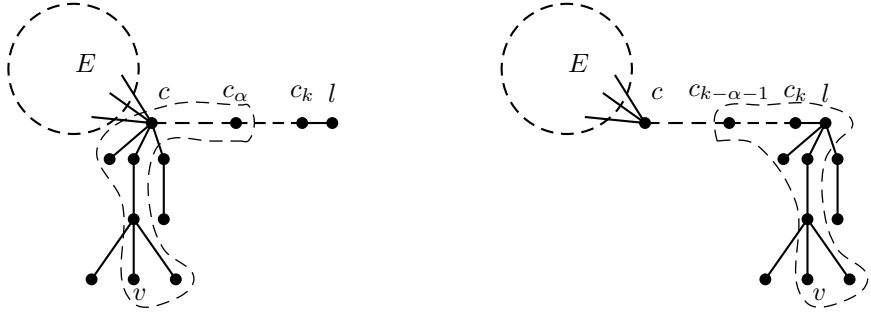
\begin{figure}[H]

\caption{The set $I_m$ followed by the set $I_m'$.}
\label{setbm}

\[
\psset{unit=0.8cm}
\begin{pspicture}(4,0)(14.2,5)
\rput(5.9,4){$E$}
\rput(7.2,3.5){$c$}\qdisk(7,3){2.25pt}
\psline(7,3)(6.5,3.8)
\psline(7,3)(6.3,3.5)
\psline(7,3)(6, 3.1)
\psline[linestyle=dashed](7,3)(8.4,3)\psline[linestyle=dashed](8.4,3)(9.5,3)\psline(9.5,3)(10,3)
\rput(8.4,3.5){$c_\alpha$}\rput(9.5,3.5){$c_k$}\rput(10,3.5){$l$}
\qdisk(8.4,3){2.25pt}\qdisk(9.5,3){2.25pt}\qdisk(10,3){2.25pt}
\pscircle[linestyle=dashed](5.7,3.9){1.1}
\psline(7,3)(6.3,2.4)\qdisk(6.3,2.4){2.25pt}
\psline(7,3)(6.7,2.4)(6.7,1.4)(6.7,.4)\qdisk(6.7,2.4){2.25pt}\qdisk(6.7,1.4){2.25pt}\qdisk(6.7,.4){2.25pt}
\psline(7,3)(7.2,2.4)(7.2,1.4)\qdisk(7.2,2.4){2.25pt}\qdisk(7.2,1.4){2.25pt}
\psline(6.7,1.4)(6,.4)\qdisk(6,.4){2.25pt}
\psline(6.7,1.4)(7.4,.4)\qdisk(7.4,.4){2.25pt}

\pscurve[linestyle=dashed, linewidth=.5pt](8.5,2.7)(8.6,2.7)(8.6,3.3)(8.5,3.3)(7.3,3.3)(6.1,2.6)(6.5,1.7)(6.7,0)(7.2,0)
(7.7,.4)(7.2,.9)(7,1.4)(7,2.4)(7.2,2.7)(8,2.7)(8.5,2.7)
\rput(6.8, 0.15){$v$}
\end{pspicture}
\begin{pspicture}(6,0)(14.2,5)
\rput(5.9,4){$E$}
\rput(7.2,3.5){$c$}\qdisk(7,3){2.25pt}
\psline(7,3)(6.5,3.8)
\psline(7,3)(6.3,3.5)
\psline(7,3)(6, 3.1)
\psline[linestyle=dashed](7,3)(8.4,3)\psline[linestyle=dashed](8.4,3)(9.5,3)\psline(9.5,3)(10,3)
\rput(8.4,3.5){$c_{k-\alpha-1}$}\rput(9.5,3.5){$c_k$}\rput(10,3.5){$l$}
\qdisk(8.4,3){2.25pt}\qdisk(9.5,3){2.25pt}\qdisk(10,3){2.25pt}
\pscircle[linestyle=dashed](5.7,3.9){1.1}

\psline(10,3)(9.3,2.4)\qdisk(9.3,2.4){2.25pt}
\psline(10,3)(9.7,2.4)(9.7,1.4)(9.7,.4)\qdisk(9.7,2.4){2.25pt}\qdisk(9.7,1.4){2.25pt}\qdisk(9.7,.4){2.25pt}
\psline(10,3)(10.2,2.4)(10.2,1.4)\qdisk(10.2,2.4){2.25pt}\qdisk(10.2,1.4){2.25pt}
\psline(9.7,1.4)(9,.4)\qdisk(9,.4){2.25pt}
\psline(9.7,1.4)(10.4,.4)\qdisk(10.4,.4){2.25pt}
\pscurve[linestyle=dashed, linewidth=.5pt](8.2,2.7)(8.7,2.7)(8.9,2.6)(9.5,1.7)(9.7,0)(10.2,0)
(10.7,.4)(10.2,.9)(10,1.4)(10,2.4)(10.2,2.7)(10.45,3)(9.9,3.3)(9,3.3)(8.2,3.2)(8.2,2.7)
\rput(9.9, 0.15){$v$}
\end{pspicture}
\]
\end{figure}

\noindent We let $\mathcal{B}_m$ denote the building set obtained after adding the elements up to and including $I_m$ to $\overline {\mathcal{B}}$. Let $\widetilde{\mathcal{B}}_{m-1}$ denote the building set $\mathcal{B}_m -\{I_m\}$ (note that $\widetilde{\mathcal{B}}_{m-1}$ is not necessarily equal to $\mathcal{B}_{m-1}$ since $I_{m-1}$ is not necessarily added directly before $I_m$). Then by Lemma \ref{vollem} $$\gamma(\mathcal{B}_m) -\gamma(\widetilde{\mathcal{B}}_{m-1}) = t\gamma(\widetilde{\mathcal{B}}_{m-1}|_{I_m})\gamma(\widetilde{\mathcal{B}}_{m-1}/I_m).$$ Since we do not add a subset of a set before adding the set, we have that $$\widetilde{\mathcal{B}}_{m-1}|_{I_m} = \mathcal{B}(G)|_{I_m -\{v\}} \cup \{\{v\}\} \cong \mathcal{B}(G')|_{I_m' - \{v\}} \cup \{\{v\}\} = \mathcal{B}_{m-1}'|_{I_m'}.$$ We let $K'$ denote the set of vertices in $G' - I_m'$ that are adjacent in $G'$ to a vertex in $I_m'$, and we let $K$ denote the set of vertices in $G-I_m$ that are adjacent in $G$ to a vertex in $I_m$. Then $\mathcal{B}_{m-1}'/I_m' = \mathcal{B}(G')/I_m'$. This is true since we know that $\mathcal{B}_{m-1}'/I_m' \subseteq \mathcal{B}(G')/I_m'$ since $\mathcal{B}_{m-1}' \subseteq \mathcal{B}(G')$. To show that $\mathcal{B}_{m-1}'/I_m' \supseteq \mathcal{B}(G')/I_m'$, note that $\mathcal{B}(G')/I_m' = \mathcal{B}(\hat G')$ where $\hat G'$ is the graph $G'-I_m'$ with additional edges so that the restriction to $K'$ is a complete graph. The elements of $\mathcal{B}(\hat G')$ that are the edges between elements in $K'$ are in $\mathcal{B}_{m-1}'/I_m'$ because any two vertices in $K'$ are linked by a path of vertices contained in $I_m'-v$. By a similar argument we have that $\widetilde{\mathcal{B}}_{m-1}/I_m = \mathcal{B}(G)/I_m$. Note that $\mathcal{B}(G)/I_m = \mathcal{B}(\hat G)$ where $\hat G$ denotes the graph $G-I_m$ with additional edges so that the restriction to $K$ is a complete graph, (see Figure \ref{diagramfive}).\\

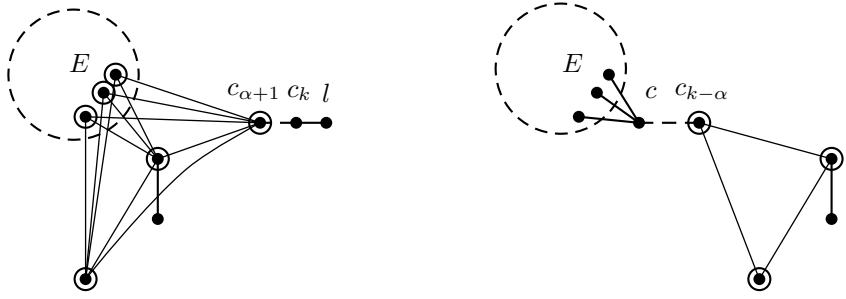
\begin{figure}[H]
\caption{The graph $\hat G$ for the contraction $\mathcal{B}_{m-1}/I_m =\mathcal{B}(\hat G)$ followed by the graph $\hat{G'}$ for the contraction $\mathcal{B}_{m-1}'/I_m' = \mathcal{B}(\hat{G'})$. The vertices $K$ and $K'$ are drawn with an additional ring around them.}
\label{diagramfive}
\[
\psset{unit=0.8cm}
\begin{pspicture}(4,0)(14.2,5)
\rput(5.9,4){$E$}
\qdisk(6.5,3.8){2.25pt}
\qdisk(6.3,3.5){2.25pt}
\qdisk(6,3.1){2.25pt}
\psline[linestyle=dashed](8.8,3)(9.5,3)\psline(9.5,3)(10,3)
\rput(8.8,3.5){$c_{\alpha+1}$}\rput(9.55,3.5){$c_k$}\rput(10,3.5){$l$}
\qdisk(8.9,3){2.25pt}\qdisk(9.5,3){2.25pt}\qdisk(10,3){2.25pt}
\pscircle(6,3.1){.2}\pscircle(8.9,3){.2}\pscircle(6,.4){.2}\pscircle(6.5,3.8){.2}\pscircle(6.3,3.5){.2}\pscircle(7.2,2.4){.2}
\pscircle[linestyle=dashed](5.8,3.8){1.1}
\psline(7.2,2.4)(7.2,1.4)\qdisk(7.2,2.4){2.25pt}\qdisk(7.2,1.4){2.25pt}
\qdisk(6,.4){2.25pt}
\psline[linewidth=.5pt](6,.4)(7.2,2.4)
\psline[linewidth=0.5pt](6,.4)(6.5,3.8)
\psline[linewidth=0.5pt](6,.4)(6.3,3.5)
\psline[linewidth=0.5pt](6,.4)(6,3.1)
\psline[linewidth=0.5pt](8.9,3)(6.5,3.8)
\psline[linewidth=0.5pt](8.9,3)(6.3,3.5)
\psline[linewidth=0.5pt](8.9,3)(6,3.1)
\psline[linewidth=0.5pt](7.2,2.4)(6,3.1)
\psline[linewidth=0.5pt](7.2,2.4)(6.5,3.8)
\psline[linewidth=0.5pt](7.2,2.4)(6.3,3.5)
\pscurve[linewidth=0.5pt](6,.4)(7.7,2.25)(8.9,3)
\psline[linewidth=0.5pt](7.2,2.4)(8.9,3)
\end{pspicture}
\begin{pspicture}(6,0)(14.2,5)
\rput(5.9,4){$E$}
\rput(7.2,3.5){$c$}\qdisk(7,3){2.25pt}
\qdisk(6.5,3.8){2.25pt}
\qdisk(6.3,3.5){2.25pt}
\qdisk(6,3.1){2.25pt}
\psline(7,3)(6.5,3.8)
\psline(7,3)(6.3,3.5)
\psline(7,3)(6, 3.1)
\psline[linestyle=dashed](7,3)(8.,3)
\rput(8.05,3.5){$c_{k-\alpha}$}
\qdisk(8.,3){2.25pt}
\pscircle[linestyle=dashed](5.7,3.9){1.1}
\psline(10.2,2.4)(10.2,1.4)\qdisk(10.2,2.4){2.25pt}\qdisk(10.2,1.4){2.25pt}
\qdisk(9,.4){2.25pt}
\pscircle(10.2,2.4){.2}\pscircle(8.,3){.2}\pscircle(9,.4){.2}
\psline[linewidth=0.5pt](9,.4)(10.2,2.4)
\psline[linewidth=0.5pt](8,3)(10.2,2.4)
\psline[linewidth=0.5pt](8,3)(9,.4)
\end{pspicture}
\]
\end{figure}

\noindent We also have that $\gamma(\mathcal{B}_{m-1}'/I_m') \le \gamma(\widetilde{\mathcal{B}}_{m-1}/I_m)$ because $\hat{G'}$ can be obtained from $\hat{G}$ by first removing edges (which lowers the $\gamma$-polynomial of the corresponding graphical building set by Theorem \ref{volthm}) and then performing a tree shift on a graph with fewer than $n$ vertices (or doing no tree shift in the case that $c_{\alpha} = c_k$ or $c_{\alpha} = l$), which we assume lowers the $\gamma$-polynomial (see Figure \ref{diagramsix}). Hence

\begin{align*}\gamma(\mathcal{B}_m') - \gamma(\mathcal{B}_{m-1}') &= t\gamma(\mathcal{B}_{m-1}'|_{I'_m})\gamma(\mathcal{B}_{m-1}'/I_m')\\ &\le t\gamma(\widetilde{\mathcal{B}}_{m-1}|_{I_m})\gamma(\widetilde{\mathcal{B}}_{m-1}/I_m)\\ &= \gamma(\mathcal{B}_{m}) - \gamma(\widetilde{\mathcal{B}}_{m-1}).\end{align*}


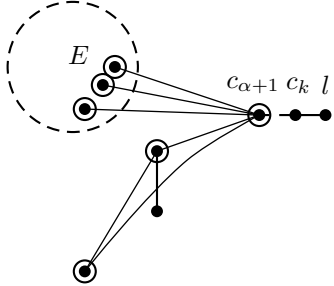
\begin{figure}[H]
\caption{The graph that is obtained after removing edges from $\hat G$ in Figure \ref{diagramfive}. The tree shift of this graph gives the graph $\hat{G'}$ of Figure \ref{diagramfive}.}
\label{diagramsix}
\[
\psset{unit=0.8cm}
\begin{pspicture}(1,0)(14.2,5)
\rput(5.9,4){$E$}
\qdisk(6.5,3.8){2.25pt}
\qdisk(6.3,3.5){2.25pt}
\qdisk(6,3.1){2.25pt}
\psline[linestyle=dashed](8.8,3)(9.5,3)\psline(9.5,3)(10,3)
\rput(8.8,3.5){$c_{\alpha+1}$}\rput(9.55,3.5){$c_k$}\rput(10,3.5){$l$}
\qdisk(8.9,3){2.25pt}\qdisk(9.5,3){2.25pt}\qdisk(10,3){2.25pt}
\pscircle(6,3.1){.2}\pscircle(8.9,3){.2}\pscircle(6,.4){.2}\pscircle(6.5,3.8){.2}\pscircle(6.3,3.5){.2}\pscircle(7.2,2.4){.2}
\pscircle[linestyle=dashed](5.8,3.8){1.1}
\psline(7.2,2.4)(7.2,1.4)\qdisk(7.2,2.4){2.25pt}\qdisk(7.2,1.4){2.25pt}
\qdisk(6,.4){2.25pt}
\psline[linewidth=.5pt](6,.4)(7.2,2.4)
\psline[linewidth=0.5pt](8.9,3)(6.5,3.8)
\psline[linewidth=0.5pt](8.9,3)(6.3,3.5)
\psline[linewidth=0.5pt](8.9,3)(6,3.1)
\pscurve[linewidth=0.5pt](6,.4)(7.7,2.25)(8.9,3)
\psline[linewidth=0.5pt](7.2,2.4)(8.9,3)
\end{pspicture}
\]
\end{figure}
Now suppose that $I_m' \cap E \ne \emptyset$, so that $\{c, c_1,...,c_k,l\} \subseteq I_m'$. Let $I_m$ denote $I_m'$, which is a set that is also added to $\overline{\mathcal{B}}$ to obtain $\mathcal{B}(G)$ (see Figure \ref{bmpics}). Define $\mathcal{B}_{m-1}',~\widetilde{\mathcal{B}}_{m-1}$ as in the previous case. \\

\begin{figure}[H]
\caption{The set $I_m$ followed by the set $I_m'$.}
\label{bmpics}
\[
\psset{unit=0.8cm}
\begin{pspicture}(4,-.5)(14.2,5.5)
\rput(5.34,4.2){$e$}
\rput(7.2,3.6){$c$}\qdisk(7,3){2.25pt}
\psline(7,3)(6.5,3.8)
\psline(7,3)(6.3,3.5)
\psline(7,3)(6, 3.1)
\psline[linestyle=dashed](7,3)(9.5,3)\psline(9.5,3)(10,3)
\rput(9.5,3.5){$c_k$}\rput(10,3.5){$l$}
\qdisk(9.5,3){2.25pt}\qdisk(10,3){2.25pt}
\pscircle[linestyle=dashed](5.7,3.9){1.1}
\psline(7,3)(6.3,2.4)\qdisk(6.3,2.4){2.25pt}
\psline(7,3)(6.7,2.4)(6.7,1.4)(6.7,.4)\qdisk(6.7,2.4){2.25pt}\qdisk(6.7,1.4){2.25pt}\qdisk(6.7,.4){2.25pt}
\psline(7,3)(7.2,2.4)(7.2,1.4)\qdisk(7.2,2.4){2.25pt}\qdisk(7.2,1.4){2.25pt}
\psline(6.7,1.4)(6,.4)\qdisk(6,.4){2.25pt}
\psline(6.7,1.4)(7.4,.4)\qdisk(7.4,.4){2.25pt}
\pscurve[linestyle=dashed, linewidth=.5pt](10.1,2.7)(10.2,3)(10.2,3.2)(9.5,3.3)(8.5,3.3)(7.3,3.3)(6.3,4.3)(5.5,4)(5.5,3)(6.3,2.8)(6.1,2.6)(6.5,1.7)(6.7,0)(7.2,0)
(7.7,.4)(7.2,.9)(7,1.4)(7,2.4)(7.2,2.7)(8,2.7)(8.5,2.7)(10.1,2.7)
\rput(6.8, 0.15){$v$}
\end{pspicture}
\psset{unit=0.8cm}
\begin{pspicture}(6,-.5)(14.2,5.5)
\rput(5.9,4){$e$}
\rput(7.3,3.6){$c$}\qdisk(7,3){2.25pt}
\psline(7,3)(6.5,3.8)
\psline(7,3)(6.3,3.5)
\psline(7,3)(6, 3.1)
\psline[linestyle=dashed](7,3)(9.5,3)\psline(9.5,3)(10,3)
\rput(9.5,3.5){$c_k$}\rput(10,3.5){$l$}
\qdisk(9.5,3){2.25pt}\qdisk(10,3){2.25pt}
\pscircle[linestyle=dashed](5.7,3.9){1.1}
\psline(10,3)(9.3,2.4)\qdisk(9.3,2.4){2.25pt}
\psline(10,3)(9.7,2.4)(9.7,1.4)(9.7,.4)\qdisk(9.7,2.4){2.25pt}\qdisk(9.7,1.4){2.25pt}\qdisk(9.7,.4){2.25pt}
\psline(10,3)(10.2,2.4)(10.2,1.4)\qdisk(10.2,2.4){2.25pt}\qdisk(10.2,1.4){2.25pt}
\psline(9.7,1.4)(9,.4)\qdisk(9,.4){2.25pt}
\psline(9.7,1.4)(10.4,.4)\qdisk(10.4,.4){2.25pt}
\pscurve[linestyle=dashed, linewidth=.5pt](8.2,2.7)(8.6,2.7)(9.4,1.7)(9.7,0)(10.2,0)
(10.7,.4)(10.2,.9)(10,1.4)(10,2.4)(10.2,2.7)(10.45,3)(9.9,3.3)(9,3.3)(8.2,3.3)(7.3,3.3)(6.3,4.3)(5.5,4)(5.5,3)(6.3,2.7)
(7.3,2.7)(8.2,2.7)
\rput(9.9, 0.15){$v$}
\end{pspicture}
\]
\end{figure}

\noindent Then we have that $\widetilde{\mathcal{B}}_{m-1}|_{I_m} = \mathcal{B}_{m-1}'|_{I_m'}$ and $\widetilde{\mathcal{B}}_{m-1}/I_m = \mathcal{B}_{m-1}'/I_m'$ which are both equal to $\mathcal{B}(G)/I_m$. This can be shown by arguments similar to those used in the case where $I_m' \cap E = \emptyset$. Hence in this case we also have

\begin{align*}\gamma(\mathcal{B}_m') - \gamma(\mathcal{B}_{m-1}') &= t\gamma(\mathcal{B}_{m-1}'|_{I_m'})\gamma(\mathcal{B}_{m-1}'/I_m')\\ &\le t\gamma(\widetilde{\mathcal{B}}_{m-1}|_{I_m})\gamma(\widetilde{\mathcal{B}}_{m-1}/I_m)\\ &= \gamma(\mathcal{B}_{m}) - \gamma(\widetilde{\mathcal{B}}_{m-1}).\end{align*}

Since for every element $I_m'$ that is added to $\overline{\mathcal{B}'}$ to obtain $\mathcal{B}(G')$ there is a corresponding element $I_m$ that is added to $\overline{\mathcal{B}}$ to obtain $\mathcal{B}(G)$ that increases the $\gamma$-polynomial by at least as much as $I_m'$ we have that $$\gamma(\mathcal{B}(G')) - \gamma(\overline{\mathcal{B}'}) \le  \gamma(\mathcal{B}(G))- \gamma(\overline{\mathcal{B}})$$ as desired.

\end{proof}


By applying Theorem \ref{bigthm} to the case where the graph is a tree we obtain the following Theorem, which is predicted by \cite[Conjecture 14.1]{prw}.
\begin{thm}
Let $S$ be the set of all tree graphs on $n$ nodes. Define the relation $T'\le T$ if $T'$ can be obtained by applying any number of tree shifts to $T$. Then $\le$ defines a partial order on $S$ with the following properties.
\begin{itemize}
\item $\mathrm{Path_n}$ is the unique $\le$-minimum element.
\item $K_{1,n-1}$ is the unique $\le$-maximum element.
\item $T'\le T$ implies $\gamma (\mathcal{B}(T')) \le \gamma(\mathcal{B}(T))$.
\end{itemize}\label{partial}
\end{thm}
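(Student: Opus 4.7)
The plan is to check the partial-order axioms, identify the extreme elements, and deduce the $\gamma$-inequality directly from iterated application of Theorem \ref{bigthm}. Reflexivity (zero shifts) and transitivity (concatenation of shift sequences) are immediate, and the $\gamma$-monotonicity follows at once from Theorem \ref{bigthm}, so the substance of the argument lies in antisymmetry and the two extreme-element characterizations.

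For antisymmetry I will introduce the Wiener index $W(T) := \sum_{\{u,v\}} d_T(u,v)$ as a strict monotone invariant. Given a tree shift $G \to G'$ with data $l, c, c_1, \ldots, c_k, F, E$, the induced subgraph on $V(G) \setminus F$ is identical in $G$ and $G'$, so distances among non-$F$ vertices and among $F$-vertices are preserved. For $w \in F$ and $x \notin F$ one has
\[
d_{G'}(w,x) - d_G(w,x) \;=\; d_{G'}(l,x) - d_G(c,x),
\]
and summing over $x \in \{c, c_1, \ldots, c_k, l\}$ telescopes to zero, while each $x \in E$ contributes $k+1$. Thus $W(G') - W(G) = |F|\,(k+1)\,|E| > 0$, since each factor is at least $1$. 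Every tree shift strictly raises $W$, so no cycle of shifts can close, giving antisymmetry.

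To show $\mathrm{Path}_n$ is the unique $\le$-minimum, observe first that $\mathrm{Path}_n$ admits no tree shift, having no vertex of degree $> 2$ to serve as $c$. Conversely, any $T \ne \mathrm{Path}_n$ does admit one: pick a leaf $l$ of $T$ and, walking from $l$ along its unique outward path, let $c$ be the first vertex of degree $> 2$ (which exists since $T$ has some vertex of degree $\ge 3$); take $F$ to be the vertex set of any branch of $c$ other than the one containing $l$, and let $E$ consist of the remaining $\ge 1$ branches of $c$. Iterating tree shifts from $T$ yields a sequence along which $W$ strictly increases; boundedness by $W(\mathrm{Path}_n) = \binom{n+1}{3}$ forces termination at a tree with no vertex of degree $> 2$, which must be $\mathrm{Path}_n$. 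Uniqueness then follows from antisymmetry.

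The maximality of $K_{1,n-1}$ is the main obstacle, and is established by strong induction on $W(T)$ via the key lemma: every tree $T \ne K_{1,n-1}$ on $n$ vertices arises as a tree shift of some tree $T^*$. Granted this, the previous paragraph gives $W(T^*) < W(T)$, and induction produces $T \le T^* \le K_{1,n-1}$. To construct $T^*$, I split cases. If $T$ has a vertex $v^*$ of degree $\ge 3$, then because $T$ is not a star, $v^*$ has a non-leaf neighbor $l$; set $c := v^*$, $k := 0$, $F_c := N_T(l) \setminus \{v^*\}$, let $F$ be the union of components of $T - \{l\}$ not containing $v^*$, and let $E$ be the remaining component of $T - \{l\}$ with $v^*$ removed, so that $|E| \ge \deg_T(v^*) - 1 \ge 2$. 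If instead $T$ has no vertex of degree $\ge 3$, then $T = \mathrm{Path}_n$ with $n \ge 4$, and the choices $l = v_2$, $c = v_3$, $k = 0$, $F = \{v_1\}$, $E = \{v_4, \ldots, v_n\}$ produce $T^*$ as the ``broom'' tree in which $v_3$ has degree $3$. In each case $T^*$ is defined by rewiring each edge $(l, f)$, $f \in F_c$, to become $(c, f)$; one verifies that $T^*$ is a tree with $\deg_{T^*}(c) > 2$ and that the forward shift of $T^*$ with the chosen data returns $T$ exactly. Uniqueness of the maximum follows by antisymmetry, completing the proof.
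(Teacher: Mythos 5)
Your proof is correct and follows the same overall architecture as the paper's: reflexivity/transitivity are immediate, a strictly monotone numerical invariant furnishes antisymmetry, the $\gamma$-inequality is inherited from Theorem \ref{bigthm}, and the extremality claims come from showing that every non-$\mathrm{Path}_n$ tree admits a forward shift while every non-$K_{1,n-1}$ tree is the image of a forward shift (equivalently, admits a ``reverse shift''). The one genuine point of divergence is the choice of invariant. The paper observes that a tree shift always kills exactly one leaf (the former leaf $l$ gains degree, while $c$ keeps degree $\ge 2$ because some vertex of $E$ remains attached to it), and then reads off antisymmetry and both extremality bounds directly from leaf counts. You instead compute the exact change in the Wiener index, $W(G')-W(G)=|F|(k+1)|E|>0$, and use its boundedness for termination. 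Your computation is correct---the telescoping over the path $\{c,c_1,\ldots,c_k,l\}$ and the uniform $+(k+1)$ contribution from each vertex of $E$ both check out---and it has the incidental virtue of quantifying what the paper's closing remark observes qualitatively, namely that tree shifts strictly increase the Wiener index. The leaf-count argument is shorter for the purpose at hand, since it makes the minimality and maximality of $\mathrm{Path}_n$ and $K_{1,n-1}$ one-line observations, whereas you must invoke well-founded induction/termination separately at both ends; but your more explicit reverse-shift construction (specifying $F_c=N_T(l)\setminus\{v^*\}$ and verifying the forward shift of $T^*$ recovers $T$) is a useful elaboration of what the paper states rather tersely.
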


\begin{proof}
This relation is a partial order on $S$, since given any $a,b \in S$ we have that if $a \le  b$ and $b \le a$ then $a=b$ because any tree shift decreases the number of leaves by one.\\

$\mathrm{Path_n}$ is $\le$-minimal since no tree has fewer leaves than $\mathrm{Path_n}$. Let $T$ be a tree that is not $\mathrm{Path_n}$. We can apply a tree shift to $T$ since if we travel along the path from any leaf inwards we must eventually meet a vertex of degree three or more. Hence $T$ is not $\le$-minimal, so that $\mathrm{Path_n}$ is the unique $\le$-minimum element.\\

$K_{1,n-1}$ is $\le$-maximal because no tree has more leaves than $K_{1,n-1}$. Suppose that $T'$ is a tree that is not $K_{1,n-1}$. We can perform a \emph{reverse shift}, which sends $T'$ to a tree $T$ such that we can apply a tree shift to $T$ to obtain $T'$. $T'$ must contain two adjacent vertices $c$ and $l$, neither of which is a leaf. To obtain $T$, we attach the component of $T'-\{c ,l\}$ that was attached to $l$ in $T$, and attach it to $c$, so that the vertices that were attached to $l$ are now attached to $c$. Hence $T'$ is not $\le$-maximal, so that $K_{1,n-1}$ is the unique $\le $-maximum element.\\

By Theorem \ref{bigthm}, if $T' \le T$ then $\gamma(\mathcal{B}(T')) \le \gamma(\mathcal{B}(T)).$
\end{proof}

Theorem \ref{bigthm} provides a new (arguably more explicit) proof of the bounds on the $\gamma$-polynomial of trees (Equation \ref{eqone}) than that provided in \cite[Theorem 9.4, (1)]{bv}.

\end{section}

\begin{section}{Flossing moves}

Let $G$ be a graph with $n$ vertices labelled 1 to $n$. A pair of leaves $l, \hat l$ in $G$ \emph{floss} a vertex $v \in G$ if there is a unique path in $G$ from $l$ to $\hat l$ of minimal length, and $v$ is the unique branched vertex (having degree $\ge 3$) on this path. \cite[Proposition 4.8]{br} shows that for any tree graph $T$ that is not $\mathrm{Path_n}$, there exists a triple of vertices $(l,\hat l,v)$ in which the vertices $l,\hat l$ floss the vertex $v$. When $l, \hat l$ floss a vertex $v$, relabel so that $$\mathrm{dist}_G(l,v) \le \mathrm{dist}_G(\hat l,v),$$ where $\mathrm{dist}_G(v_1,v_2)$ denotes the number of edges in a minimal path in $G$ between vertices $v_1$ and $v_2$. Flossing moves are defined in \cite{br}, and it was suggested in \cite{prw} that they might lower the $\gamma$-polynomial of the graph-associahedra. We show that this is true for flossing moves that are a generalisation of those given in \cite{br}. Let $G$ be a graph with a triple of vertices $(l, \hat l, v)$ such that $l, \hat l$ are leaves that floss the vertex $v$ (and $\mathrm{dist}_G(l,v) \le \mathrm{dist}_G(\hat l,v)$). A \emph{flossing move} on $G$ is obtained by removing the edge $(l,w)$ and adding an edge $(\hat l,l)$ where $w$ is the nearest vertex (possibly $v$) to $l$. We let $r: = \mathrm{dist}_G(l,v)+1$ (the number of vertices in the chain from $l$ to $v$), and $\hat r:=\mathrm{dist}_G(\hat l,v)+1$ (see Figure \ref{flossdiag}). \\

\begin{figure}[H]
\caption{A graph $G$ followed by a flossing move applied to $G$. In this example we have $r=4$ and $\hat r=7$. The loop represents $G$ minus the path of vertices from $l$ to $\hat l$ that contains $v$.}

\label{flossdiag}
\[
\psset{unit=0.8cm}
\begin{pspicture}(4,0)(14.2,4.5)
\pscurve(7,2)(6.2,3)(7,4)(7.8,3)(7,2)
\qdisk(7,2){2.25pt}
\rput(7,1.5){$v$}
\psline(7,2)(5.5,1.55)
\psline(7,2)(10.6,1.4)
\qdisk(5.5,1.55){2.25pt}
\qdisk(6,1.7){2.25pt}
\qdisk(6.5,1.85){2.25pt}
\qdisk(10.6,1.4){2.25pt}
\rput(6,1.3){$w$}
\rput(5.5,1.15){$l$}
\qdisk(7.6,1.9){2.25pt}
\qdisk(8.2,1.8){2.25pt}
\qdisk(8.8,1.7){2.25pt}
\qdisk(9.4,1.6){2.25pt}
\qdisk(10,1.5){2.25pt}
\qdisk(10.6,1.4){2.25pt}
\rput(10.6,1){$\hat l$}
\pscurve[linewidth=.5pt](6.5,3.2)(7.2,3.6)(6.7,2.4)(7.45,2.8)
\end{pspicture}
\begin{pspicture}(7,0)(14.2,4.5)
\pscurve(7,2)(6.2,3)(7,4)(7.8,3)(7,2)
\qdisk(7,2){2.25pt}
\rput(7,1.5){$v$}
\psline(7,2)(6,1.7)
\psline(7,2)(11.2,1.3)
\qdisk(6,1.7){2.25pt}
\qdisk(6.5,1.85){2.25pt}
\qdisk(10.6,1.4){2.25pt}
\rput(6,1.3){$w$}
\qdisk(7.6,1.9){2.25pt}
\qdisk(8.2,1.8){2.25pt}
\qdisk(8.8,1.7){2.25pt}
\qdisk(9.4,1.6){2.25pt}
\qdisk(10,1.5){2.25pt}
\qdisk(10.6,1.4){2.25pt}
\rput(10.6,1){$\hat l$}
\qdisk(11.2,1.3){2.25pt}
\rput(11.2,.9){$ l$}
\pscurve[linewidth=.5pt](6.5,3.2)(7.2,3.6)(6.7,2.4)(7.45,2.8)
\end{pspicture}
\]
\end{figure}
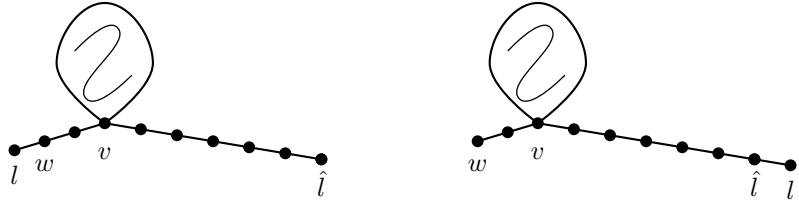

%

\begin{thm}
Let $G$ be a connected graph, and let $G'$ be the resulting flossing move of $G$. Then $\gamma(\mathcal{B}(G')) \le \gamma(\mathcal{B}(G))$. \label{bigthmtwo}
\end{thm}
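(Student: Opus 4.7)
The plan is to adapt the inductive scheme of Theorem~\ref{bigthm}, inducting on $n$, the number of vertices of $G$. When $n$ is too small to admit a flossing move the statement is vacuous, so assume the result for all connected graphs on fewer than $n$ vertices. Choose $l$ itself as the distinguished leaf. Since $l$ is a leaf of both $G$ and $G'$ (attached to $w$ in $G$, to $\hat l$ in $G'$), removing it from either graph deletes only its unique incident edge, so $G-l = G'-l$. Consequently
\[
\overline{\mathcal{B}} := \mathcal{B}(G-l)\cup\{\{l\},[n]\} = \mathcal{B}(G'-l)\cup\{\{l\},[n]\} =: \overline{\mathcal{B}'}
\]
is a single flag building set contained in both $\mathcal{B}(G)$ and $\mathcal{B}(G')$, with $\gamma(\overline{\mathcal{B}}) = \gamma(\mathcal{B}(G-l))$ by Lemma~\ref{combeq}. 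By Theorem~\ref{flagbuild} one reaches $\mathcal{B}(G)$ and $\mathcal{B}(G')$ from $\overline{\mathcal{B}}$ by successive single-element additions through flag building sets; by Lemma~\ref{vollem} and Theorem~\ref{posthm}, each addition increases the $\gamma$-polynomial by a non-negative summand $t\gamma(\mathcal{B}|_I)\gamma(\mathcal{B}/I)$.

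It suffices to construct an injection $I' \mapsto I$ from $\mathcal{B}(G')-\overline{\mathcal{B}'}$ into $\mathcal{B}(G)-\overline{\mathcal{B}}$ for which the $\gamma$-contribution of $I'$ is bounded above by that of $I$. Both sets consist of non-trivial proper connected subsets containing $l$ (which in $G'$ must also contain $\hat l$, and in $G$ must also contain $w$). Write $\Pi$ for the $r + \hat r - 1$ vertices on the floss path from $l$ through $v$ to $\hat l$, and $R := [n]\setminus\Pi$ for the remaining vertices (which are attached to $\Pi$ only at $v$ in either graph). Ordering the additions so that supersets precede subsets, one checks as in the proof of Theorem~\ref{bigthm} that the restrictions $\mathcal{B}'_{m-1}|_{I'}$ and $\widetilde{\mathcal{B}}_{m-1}|_{I}$ simplify respectively to $\mathcal{B}(G')|_{I'\setminus\{l\}}\cup\{\{l\}\}$ and $\mathcal{B}(G)|_{I\setminus\{l\}}\cup\{\{l\}\}$, while the contractions become graphical building sets $\mathcal{B}(\hat{G'})$ and $\mathcal{B}(\hat G)$ of smaller graphs. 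The injection must be chosen so that, for each pair, Theorem~\ref{volthm} together with the induction hypothesis (applied to a smaller flossing move relating $\hat{G'}$ and $\hat G$) delivers the required inequality.

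The main obstacle is specifying the injection. A pair-by-pair comparison is achievable, but the naive ``reflect the floss-path prefix about $v$'' map does not suffice: a small direct check on, for example, the graph with vertex set $[6]$ and edges $\{12,23,34,45,36\}$ (so $r=\hat r=3$ and $R=\{6\}$) shows that the pair $I' = \{l,\hat l\}$ must be mapped to a comparatively \emph{large} $I$---containing the full $w$-to-$v$ chain together with all of $R$---rather than to the two-element ``mirror'' $\{l,w\}$. A careful case analysis, depending on whether $v \in I'$, on $I' \cap R$, and on the length of $I' \cap \Pi$, is therefore required to specify the correct image and to verify that the resulting contracted graphs are related by a sequence of edge deletions followed by a strictly smaller flossing move.
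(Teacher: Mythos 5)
Your framework matches the paper's: starting building set $\hat{\mathcal{B}} = \mathcal{B}(G-l)\cup\{\{l\},[n]\}$ (which coincides with the paper's description of the building set obtained by discarding from $\mathcal{B}(G)$ all elements containing $\{l,w\}$ except $[n]$, since $l$ is a leaf with unique neighbour $w$), then building up both $\mathcal{B}(G)$ and $\mathcal{B}(G')$ by successive flag additions, comparing the $\gamma$-increments term by term via Lemma~\ref{vollem}. You also correctly observe that the obvious ``mirror'' injection fails and that $I'=\{l,\hat l\}$ must land on a large set. But this is precisely where the proposal stops, and the unspecified injection \emph{is} the content of the theorem. The paper's injection splits into three cases on $I'$: (i) $|I'|\le \hat r$, where $I$ is taken to contain all of $G-P$ and to have $|I\cap P|=r+\hat r - |I'|$; (ii) $|I'|\ge\hat r+1$ with $I'$ not containing all of $G-P$, where $I$ has $I\cap(G-P)=I'\cap(G-P)$ and $|I\cap P|=|I'\cap P|$; (iii) $|I'|\ge\hat r+1$ with $I'\supseteq G-P$, where $|I|=r+\hat r-|I'\cap P|$. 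Injectivity is then checked by a size/containment argument. You flag that such a case split is needed but do not produce it.

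More substantively, the verification strategy you sketch is not the one that works. You predict that restrictions correspond to restrictions and contractions to contractions, with the contracted graphs ``related by a sequence of edge deletions followed by a strictly smaller flossing move.'' In fact in Cases (i) and (iii) the paper exhibits a \emph{swap}: $\mathcal{B}_1|_I = \mathcal{B}_2/I'\cup\{\{l\}\}$ and $\mathcal{B}_1/I\cup\{\{l\}\}=\mathcal{B}_2|_{I'}$, so the $\gamma$-increments are literally equal and neither Theorem~\ref{volthm} nor the induction hypothesis is needed there. Edge deletions (and Theorem~\ref{volthm}) belong to the tree-shift proof of Theorem~\ref{bigthm}, not here. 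The flossing induction enters only in Case (ii), where it applies to the \emph{restrictions} $\mathcal{B}_i|_{\cdot}$ (not the contractions) of the two sides, via a smaller flossing move (or equality when $\mathrm{dist}_G(l,v)=1$), while the contractions are directly isomorphic. Without noticing the restriction/contraction swap you cannot close Cases (i) and (iii), so as written the argument has a genuine gap even once an injection is guessed.
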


\begin{proof}
We suppose that $G$ has $n$ vertices, and we label $G$ by $l,\hat l,r,\hat r,v$ and $w,$ as in the definition of flossing move. We assume by induction that for any graph with $< n$ vertices, that a flossing move lowers the $\gamma$-polynomial. When $n <4$ no flossing move is possible so the result is vacuously true. $\mathcal{B}(G)$ is a flag building set on $[n]$, and the building set $\hat{\mathcal{B}}$ that is obtained from $\mathcal{B}(G)$ by removing all building set elements that contain $\{l,w\}$ apart from $[n]$ is also a flag building set on $[n]$. Hence by Theorem \ref{flagbuild}, $\mathcal{B}(G)$ can be obtained from $\hat{\mathcal{B}}$ by successively adding building set elements so that at each step the set is a flag building set. Similarly, $\mathcal{B}(G')$ can be obtained from $\hat{\mathcal{B}}$ by successively adding building set elements so that at each step the set is a flag building set. Similar to the arguments used in the proof of Theorem \ref{bigthm}, we construct an injection

$$\mathcal{B}(G') - \hat{\mathcal{B}} \rightarrow \mathcal{B}(G) - \hat{\mathcal{B}}$$
$$I' \mapsto I.$$ We then show that the increase in the $\gamma$-polynomial when adding the element in $\mathcal{B}(G') - \hat{\mathcal{B}}$ is less than or equal to the increase when adding the corresponding element in $\mathcal{B}(G) - \hat{\mathcal{B}}$ which proves the Theorem.\\

Let $I_1,I_2,...,I_k$ be the building set elements of $\mathcal{B}(G')-\hat{\mathcal{B}}$. Suppose for some $i \ne j$ that $I_j \subseteq I_i$. Then $j > i$, since $I_j \cap (I_i - \{l\}) \ne \emptyset$ and $I_j \cup (I_i - \{l\}) = I_i$ which implies that when $I_j$ is in the building set $I_i$ must be too.\\

Let $P$ be the set of vertices in the minimal path from $l$ to $\hat l$. Let $I'$ be an element that is added to $\hat{\mathcal{B}}$ to obtain $\mathcal{B}(G')$. There are three cases for $I'$ that we will consider.
\begin{itemize}
\item $|I'| \le \hat r,$
\item $|I'|\ge \hat r+1$, and $I'$ does not contain all of $G-P$,
\item $|I'| \ge \hat r+1$, and $I'$ contains all of $G-P$.
\end{itemize}
Suppose that $|I'| \le \hat r$, and let $I$ be the element of $\mathcal{B}(G')-\hat{\mathcal{B}}$ such that $|I \cap P| = r+ \hat r - |I'|,$ and $I$ contains all of $G-P$. In each case we let $\mathcal{B}_1$ (respectively $\mathcal{B}_2$) denote the building sets we have before adding $I$ (respectively $I'$). Then $\mathcal{B}_1|_{I} = \mathcal{B}_2/I' \cup \{\{l\}\}$, so that $\gamma(\mathcal{B}_1|_{I}) = \gamma(\mathcal{B}_2/I')$. Also, $\mathcal{B}_1/I \cup \{\{l\}\}= \mathcal{B}_2|_{I'}$, so that $\gamma(\mathcal{B}_1/I) = \gamma(\mathcal{B}_2|_{I'})$ (see Figure \ref{firstfloss}).\\

\begin{figure}[H]
\caption{The graph $G$ followed by $G'$. Keeping with the values of Figure \ref{flossdiag}, we have $|I'| =5$ and $|I \cap P| = 6$.}
\label{firstfloss}

\[
\psset{unit=0.8cm}
\begin{pspicture}(4.5,0)(14.2,4.5)
\pscurve(7,2)(6.2,3)(7,4)(7.8,3)(7,2)
\qdisk(7,2){2.25pt}
\rput(7,1.55){$v$}
\psline(7,2)(5.5,1.55)
\psline(7,2)(10.6,1.4)
\qdisk(5.5,1.55){2.25pt}
\qdisk(6,1.7){2.25pt}
\qdisk(6.5,1.85){2.25pt}
\qdisk(10.6,1.4){2.25pt}
\rput(6,1.3){$w$}
\rput(5.55,1.2){$l$}
\qdisk(7.6,1.9){2.25pt}
\qdisk(8.2,1.8){2.25pt}
\qdisk(8.8,1.7){2.25pt}
\qdisk(9.4,1.6){2.25pt}
\qdisk(10,1.5){2.25pt}
\qdisk(10.6,1.4){2.25pt}
\rput(10.6,1){$\hat l$}
\rput(8.6,2.7){$I$}
\pscurve[linewidth=.5pt](6.5,3.2)(7.2,3.6)(6.7,2.4)(7.45,2.8)
\pscurve[linestyle=dashed](7.8,2.3)(8.5,3)(7,4.7)(5.5,3)(6,2)(5.1,1.6)(5.1,1.3)(5.6,.9)(7.3,1.4)(8.2,1.3)(8.4,2.1)(7.8,2.3)
\psline(5.7,1.75)(5.7,1.48)
\psline(5.8,1.78)(5.8,1.5)
\end{pspicture}
\begin{pspicture}(7,0)(14.2,4.5)
\pscurve(7,2)(6.2,3)(7,4)(7.8,3)(7,2)
\qdisk(7,2){2.25pt}
\rput(7,1.5){$v$}
\psline(7,2)(6,1.7)
\psline(7,2)(11.2,1.3)
\qdisk(6,1.7){2.25pt}
\qdisk(6.5,1.85){2.25pt}
\qdisk(10.6,1.4){2.25pt}
\rput(6,1.3){$w$}
\qdisk(7.6,1.9){2.25pt}
\qdisk(8.2,1.8){2.25pt}
\qdisk(8.8,1.7){2.25pt}
\qdisk(9.4,1.6){2.25pt}
\qdisk(10,1.5){2.25pt}
\qdisk(10.6,1.4){2.25pt}
\rput(10.6,1){$\hat l$}
\qdisk(11.2,1.3){2.25pt}
\rput(11.2,.9){$l$}
\pscurve[linewidth=.5pt](6.5,3.2)(7.2,3.6)(6.7,2.4)(7.45,2.8)
\pscurve[linestyle=dashed](8.5,1.75)(8.6,2.2)(10,2.2)(11.6,1.75)(11.6,.8)(10,.7)(8.6,1.2)(8.5,1.75)
\rput(9.5,2.8){$I'$}
\psline(10.85,1.52)(10.85,1.24)
\psline(10.955,1.48)(10.955,1.2)
\end{pspicture}
\]
\end{figure}
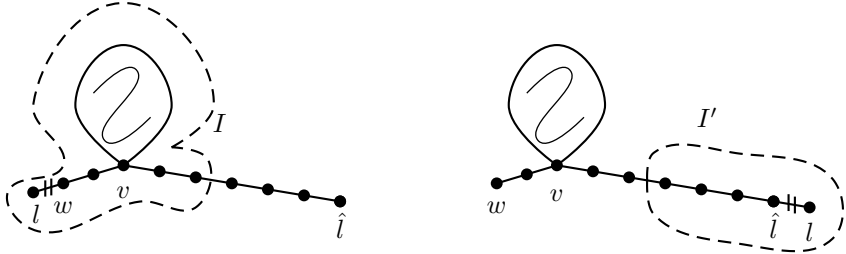
%

Suppose that $|I'| \ge \hat r +1$, and suppose that $I'$ does not contain all of $G-P$. Let $I$ be the element of $\mathcal{B}(G)-\hat{\mathcal{B}}$ such that $|I \cap P| = |I' \cap P|$, and $I \cap (G-P) = I' \cap (G-P)$. Then we have that $\mathcal{B}_1/I \cong \mathcal{B}_2/I'$, and $\mathcal{B}_1|_{I} = \mathcal{B}(G_1) \cup \{\{l\}\}$, and $\mathcal{B}_2|_{I'} = \mathcal{B}(G_2) \cup \{\{l\}\}$ where $G_2$ is a graph obtained from a graph $G_1$ by a flossing move (or if $\mathrm{dist}_G(l,v) =1$, $G_2 = G_1$). By induction on the number of vertices of the graphs involved we have that $\gamma(\mathcal{B}(G_2)) \le \gamma(\mathcal{B}(G_1))$ so that $\gamma(\mathcal{B}_2|_{I'}) \le \gamma(\mathcal{B}_1|_{I})$ (see Figure \ref{flosstwo}).\\

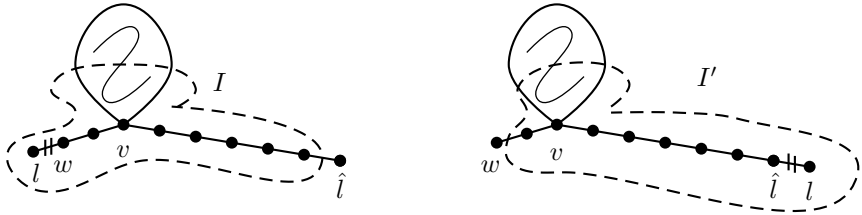
\begin{figure}[H]

\caption{The graph $\mathcal{B}_1$ followed by $\mathcal{B}_2$. We have $|I'| \ge 7$.}
\label{flosstwo}
\[
\psset{unit=0.8cm}
\begin{pspicture}(4.5,0)(14.2,4.5)
\pscurve(7,2)(6.2,3)(7,4)(7.8,3)(7,2)
\qdisk(7,2){2.25pt}
\rput(7,1.55){$v$}
\psline(7,2)(5.5,1.55)
\psline(7,2)(10.6,1.4)
\qdisk(5.5,1.55){2.25pt}
\qdisk(6,1.7){2.25pt}
\qdisk(6.5,1.85){2.25pt}
\qdisk(10.6,1.4){2.25pt}
\rput(6,1.3){$w$}
\rput(5.55,1.2){$l$}
\qdisk(7.6,1.9){2.25pt}
\qdisk(8.2,1.8){2.25pt}
\qdisk(8.8,1.7){2.25pt}
\qdisk(9.4,1.6){2.25pt}
\qdisk(10,1.5){2.25pt}
\qdisk(10.6,1.4){2.25pt}
\rput(10.6,1){$\hat l$}
\rput(8.6,2.7){$I$}
\pscurve[linewidth=.5pt](6.5,3.2)(7.2,3.6)(6.7,2.4)(7.45,2.8)
\pscurve[linestyle=dashed](7.9,2.3)(8.2,2.6)(7,3)(5.8,2.6)(6.2,2.2)(5.1,1.6)(5.1,1.3)(5.6,.9)(7.3,1.4)
(10.3,1.4)(7.8,2.3)
\psline(5.7,1.75)(5.7,1.48)
\psline(5.8,1.78)(5.8,1.5)
\end{pspicture}
\begin{pspicture}(7,0)(14.2,4.5)
\pscurve(7,2)(6.2,3)(7,4)(7.8,3)(7,2)
\qdisk(7,2){2.25pt}
\rput(7,1.5){$v$}
\psline(7,2)(6,1.7)
\psline(7,2)(11.2,1.3)
\qdisk(6,1.7){2.25pt}
\qdisk(6.5,1.85){2.25pt}
\qdisk(10.6,1.4){2.25pt}
\rput(5.9,1.3){$w$}
\qdisk(7.6,1.9){2.25pt}
\qdisk(8.2,1.8){2.25pt}
\qdisk(8.8,1.7){2.25pt}
\qdisk(9.4,1.6){2.25pt}
\qdisk(10,1.5){2.25pt}
\qdisk(10.6,1.4){2.25pt}
\rput(10.6,1){$\hat l$}
\qdisk(11.2,1.3){2.25pt}
\rput(11.2,.9){$ l$}
\pscurve[linewidth=.5pt](6.5,3.2)(7.2,3.6)(6.7,2.4)(7.45,2.8)
\pscurve[linestyle=dashed](7.9,2.3)(8.2,2.7)(7,3)(6.35,2.6)(6.5,2.2)(6.2,1.8)(6.2,1.4)(7.3,1.2)
(12,1.2)(10,2.1)(9,2.2)(7.9,2.2)
\rput(9.5,2.8){$I'$}
\psline(10.85,1.52)(10.85,1.24)
\psline(10.955,1.48)(10.955,1.2)
\end{pspicture}
\]
\end{figure}

Suppose that $|I'| \ge \hat r +1$ and $I'$ contains all of $G-P$. Let $I$ be the element of $\mathcal{B}(G) - \hat{\mathcal{B}}$ such that $|I| = r+ \hat r - |I' \cap P|$. Then $\mathcal{B}_1/I \cup \{\{l\}\} = \mathcal{B}_2|_{I'}$, and $\mathcal{B}_1|_{I} = \mathcal{B}_2/I' \cup \{\{l\}\}$. Hence $\gamma(\mathcal{B}_1/I) = \gamma(\mathcal{B}_2|_{I'})$ and $\gamma(\mathcal{B}_1|_{I}) = \gamma(\mathcal{B}_2/I')$ (see Figure \ref{flossthree}).\\

\begin{figure}[H]

\caption{The graph $\mathcal{B}_1$ followed by $\mathcal{B}_2$. We have $|I| =2$ and $|I' \cap P| = 9$.}
\label{flossthree}
\[
\psset{unit=0.8cm}
\begin{pspicture}(4.5,0)(14.2,4.5)
\pscurve(7,2)(6.2,3)(7,4)(7.8,3)(7,2)
\qdisk(7,2){2.25pt}
\rput(7,1.55){$v$}
\psline(7,2)(5.5,1.55)
\psline(7,2)(10.6,1.4)
\qdisk(5.5,1.55){2.25pt}
\qdisk(6,1.7){2.25pt}
\qdisk(6.5,1.85){2.25pt}
\qdisk(10.6,1.4){2.25pt}
\rput(6,1.2){$w$}
\rput(5.55,1.1){$l$}
\qdisk(7.6,1.9){2.25pt}
\qdisk(8.2,1.8){2.25pt}
\qdisk(8.8,1.7){2.25pt}
\qdisk(9.4,1.6){2.25pt}
\qdisk(10,1.5){2.25pt}
\qdisk(10.6,1.4){2.25pt}
\rput(10.6,1){$\hat l$}
\rput(5,2){$I$}
\pscurve[linewidth=.5pt](6.5,3.2)(7.2,3.6)(6.7,2.4)(7.45,2.8)
\pscurve(5.2,1.55)(5.75,1.8)(6.25,1.8)(5.75,1.4)(5.2,1.4)(5.2,1.55)
\psline(5.7,1.75)(5.7,1.48)
\psline(5.8,1.78)(5.8,1.5)
\end{pspicture}
\begin{pspicture}(7,0)(14.2,4.5)
\pscurve(7,2)(6.2,3)(7,4)(7.8,3)(7,2)
\qdisk(7,2){2.25pt}
\rput(7,1.5){$v$}
\psline(7,2)(6,1.7)
\psline(7,2)(11.2,1.3)
\qdisk(6,1.7){2.25pt}
\qdisk(6.5,1.85){2.25pt}
\qdisk(10.6,1.4){2.25pt}
\rput(5.9,1.3){$w$}
\qdisk(7.6,1.9){2.25pt}
\qdisk(8.2,1.8){2.25pt}
\qdisk(8.8,1.7){2.25pt}
\qdisk(9.4,1.6){2.25pt}
\qdisk(10,1.5){2.25pt}
\qdisk(10.6,1.4){2.25pt}
\rput(10.6,1){$\hat l$}
\qdisk(11.2,1.3){2.25pt}
\rput(11.2,.9){$ l$}
\pscurve[linewidth=.5pt](6.5,3.2)(7.2,3.6)(6.7,2.4)(7.45,2.8)
\pscurve[linestyle=dashed](7.9,2.3)(8.2,2.7)(7,4.2)(6.,2.6)(6.4,2.2)(6.2,1.8)(6.2,1.4)(7.3,1.2)
(12,1.2)(10,2.1)(9,2.2)(7.9,2.2)
\rput(9.5,2.8){$I'$}
\psline(10.85,1.52)(10.85,1.24)
\psline(10.955,1.48)(10.955,1.2)
\end{pspicture}
\]
\end{figure}
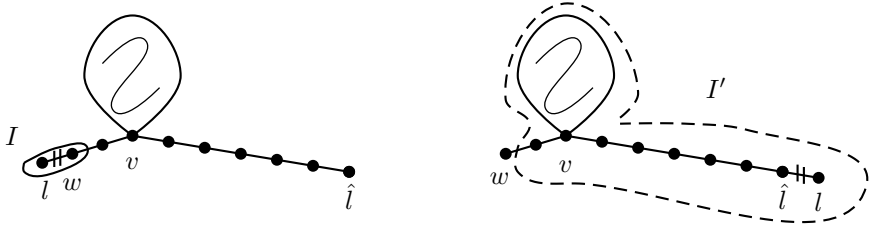

Note that no element $I \in \mathcal{B}(G)- \hat{\mathcal{B}}$ is used more than once, since in the first case we have that $|I| \ge r$ and $I$ contains all of $G-P$. In the second case we have that $|I| \ge \hat r + 1 >r$ and $I$ does not contain all of $G-P$. In the third case we have that $|I| = r + \hat r - |I' \cap P| \le r + \hat r - (\hat r +1) = r-1$. \\

By Lemma \ref{vollem} the change in the $\gamma$-polynomial when adding $I'$ is given by $$\gamma(\mathcal{B}_2 \cup \{I'\}) - \gamma(\mathcal{B}_2) = t\gamma(\mathcal{B}_2/I')\gamma(\mathcal{B}_2|_{I'}),$$ and when adding $I$ it is given by $$\gamma(\mathcal{B}_1 \cup \{I\}) - \gamma(\mathcal{B}_1) = t\gamma(\mathcal{B}_1/I)\gamma(\mathcal{B}_1|_{I}).$$ Since for every element $I'$ that is added to $\hat{\mathcal{B}}$ to obtain $\mathcal{B}(G')$, there is an element $I$ that is added to $\hat{\mathcal{B}}$ to obtain $\mathcal{B}(G)$ such that $\gamma(\mathcal{B}_2/I')\gamma(\mathcal{B}_2|_{I'}) \le \gamma(\mathcal{B}_1/I)\gamma(\mathcal{B}_1|_{I})$ we have that $\gamma(\mathcal{B}(G')) \le  \gamma(\mathcal{B}(G))$.

\end{proof}

It is exactly when $\mathrm{dist}_G(l,v)=1$ that a flossing move is a kind of tree shift. This is exactly when a flossing move reduces the number of leaves. If we partition the set $S$ of all tree graphs with $n$ vertices by their number of leaves, then tree shifts send graphs between the parts, whilst flossing moves such that $\mathrm{dist}_T(l,v) \ne 1$ give relations between graphs with the same number of leaves. This is illustrated in the following example for tree graphs with seven vertices.

\begin{example}

Arrows are drawn between pairs of graphs with the same number of leaves when one (at the head) can be obtained from the other (at the tail) by a flossing move. Arrows are drawn from a graph with $i+1$ leaves to one with $i$ leaves when the graph at the head can be obtained from the graph at the tail by a tree shift.

\begin{figure}[H]

\[
\psset{unit=0.5cm}
\begin{pspicture}(-1,-1)(1,1)
\rput(-8.5,0){$6 ~\hbox{leaves}$}
\psline(.5,0.866)(-.5,-0.866)
\psline(1,0)(-1,0)
\psline(-.5, 0.866)(.5,-0.866)
\qdisk(.5,0.866){2.25pt}\qdisk(-.5,-0.866){2.25pt}
\qdisk(1,0){2.25pt}\qdisk(-1,0){2.25pt}
\qdisk(-.5, 0.866){2.25pt}\qdisk(.5,-0.866){2.25pt}
\qdisk(0,0){2.25pt}
\psline{->}(0,-1.2)(-1.2,-3)
\psline{->}(0,-1.2)(1.2,-3)
\end{pspicture}
\]
\vspace{.5cm}
\[
\psset{unit=0.5cm}
\begin{pspicture}(-2.5,-1)(2.5,1)
\rput(-8.5,0){$5 ~\hbox{leaves}$}
\psline(-2.1,0)(-0.5,0)
\psline(-2.1,0)(-2.1,.8)
\psline(-2.1,0)(-2.1,-.8)
\psline(-2.1,0)(-2.7,.5)
\psline(-2.1,0)(-2.7,-.5)
\qdisk(-2.1,0){2.25pt}\qdisk(-.5,0){2.25pt}
\qdisk(-2.1,.8){2.25pt}\qdisk(-2.1,-.8){2.25pt}
\qdisk(-2.7, 0.5){2.25pt}\qdisk(-2.7,-0.5){2.25pt}
\qdisk(-1.3,0){2.25pt}

\psline(0.5,0)(2.1,0)
\qdisk(0.5,0){2.25pt}\qdisk(2.1,0){2.25pt}\qdisk(1.3,0){2.25pt}
\psline(1.3,.8)(1.3,-.8)\qdisk(1.3,.8){2.25pt}\qdisk(1.3,-.8){2.25pt}
\psline(2.1,0)(2.7,.5)\qdisk(2.7,.5){2.25pt}
\psline(2.1,0)(2.7,-.5)\qdisk(2.7,-.5){2.25pt}

\psline{->}(-2,-1.2)(-2,-3.5)
\psline{->}(-2,-1.2)(-4.5,-3.5)
\psline{->}(-2,-1.2)(1.5,-3.5)
\psline{->}(-2,-1.2)(4.2,-3.5)

\psline{->}(2,-1.2)(-1.7,-3.5)
\psline{->}(2,-1.2)(1.8,-3.5)
\psline{->}(2,-1.2)(4.5,-3.5)
\end{pspicture}
\]

\vspace{.5cm}
\[
\psset{unit=0.5cm}
\begin{pspicture}(-6.5,-1)(6.5,1)
\rput(-8.5,0){$4~\hbox{leaves}$}
\psline(-5.85,-1)(-5.85,1)
\psline(-6.5,-.33)(-4.5,-.33)
\qdisk(-5.85,-1){2.25pt}\qdisk(-5.85,1){2.25pt}
\qdisk(-6.5,-.33){2.25pt}\qdisk(-4.5,-.33){2.25pt}
\qdisk(-5.85,-.33){2.25pt}\qdisk(-5.85,.33){2.25pt}\qdisk(-5.18,-.33){2.25pt}

\pscurve{->}(-5.25,.8)(-4.25,1.3)(-3.25,.8)

\psline(-3.5,0)(-.7,0)
\psline(-2.8,-.7)(-2.8,.7)
\qdisk(-3.5,0){2.25pt}
\qdisk(-2.8,-.7){2.25pt}\qdisk(-2.8,.7){2.25pt}
\qdisk(-2.8,0){2.25pt}\qdisk(-2.1,0){2.25pt}\qdisk(-1.4,0){2.25pt}\qdisk(-.7,0){2.25pt}

\psline(.5,1)(1.3,-.33)
\psline(.9,-1)(1.3,-.33)
\psline(1.3,-.33)(2.1,-.33)
\psline(2.1,-.33)(2.5,.33)
\psline(2.1,-.33)(2.5,-1)
\qdisk(.5,1){2.25pt}\qdisk(1.3,-.33){2.25pt}\qdisk(.9,-1){2.25pt}\qdisk(1.3,-.33){2.25pt}
\qdisk(1.3,-.33){2.25pt}\qdisk(2.1,-.33){2.25pt}\qdisk(2.5,.33){2.25pt}\qdisk(2.1,-.33){2.25pt}\qdisk(2.5,-1){2.25pt}
\qdisk(.9,.33){2.25pt}

\psline(4.4,0)(6,0)
\psline(4.4,0)(4,.66)
\psline(4.4,0)(4,-.66)
\psline(6,0)(6.4,.66)
\psline(6,0)(6.4,-.66)
\qdisk(4.4,0){2.25pt}\qdisk(6,0){2.25pt}\qdisk(4,.66){2.25pt}\qdisk(4,-.66){2.25pt}
\qdisk(6.4,.66){2.25pt}\qdisk(6.4,-.66){2.25pt}
\qdisk(5.2,0){2.25pt}


\psline{->}(-4.5,-1)(-3.5,-4.5)
\psline{->}(-4.5,-1)(-.1,-4.5)
\psline{->}(-4.5,-1)(3.7,-4.5)

\psline{->}(-2,-1)(.2,-4.5)
\psline{->}(-2,-1)(4,-4.5)

\psline{->}(1.7,-1)(.5,-4.5)
\psline{->}(1.7,-1)(4.3,-4.5)

\psline{->}(5.4,-1)(4.6,-4.5)
\end{pspicture}
\]
\vspace{.5cm}

\[
\psset{unit=0.5cm}
\begin{pspicture}(-4,-1)(4,1)
\rput(-8.5,0){$3 ~\hbox{leaves}$}
\psline(-3.5,0)(-2.3,0)
\psline(-3.5,0)(-4.3,1)
\psline(-3.5,0)(-4.3,-1)
\qdisk(-3.5,0){2.25pt}\qdisk(-2.3,0){2.25pt}
\qdisk(-4.3,-1){2.25pt}\qdisk(-4.3,1){2.25pt}
\qdisk(-3.9,.5){2.25pt}\qdisk(-3.9,-.5){2.25pt}
\qdisk(-2.9,0){2.25pt}

\pscurve{->}(-3,1.2)(-2,1.7)(-1,1.2)

\psline(-.5,0)(1.3,0)
\psline(-.5,0)(-1.3,1)
\psline(-.5,0)(-.9,-.5)
\qdisk(-.5,0){2.25pt}\qdisk(1.3,0){2.25pt}\qdisk(-1.3,1){2.25pt}\qdisk(-.9,-.5){2.25pt}
\qdisk(-.9,.5){2.25pt}\qdisk(.1,0){2.25pt}\qdisk(.7,0){2.25pt}\qdisk(1.3,0){2.25pt}

\pscurve{->}(.8,.8)(1.8,1.3)(2.8,.8)

\psline(2.7,0)(2.3,.5)\psline(2.7,0)(2.3,-.5)
\qdisk(2.3,.5){2.25pt}\qdisk(2.3,-.5){2.25pt}
\psline(2.7,0)(5.1,0)
\qdisk(2.7,0){2.25pt}\qdisk(3.3,0){2.25pt}\qdisk(3.9,0){2.25pt}\qdisk(4.5,0){2.25pt}\qdisk(5.1,0){2.25pt}

\psline{->}(-2.5,-1)(.2,-4.2)
\psline{->}(.5,-1)(.5,-4.2)
\psline{->}(3.5,-1)(.8,-4.2)
\end{pspicture}
\]

\vspace{.5cm}
\[
\psset{unit=0.5cm}
\begin{pspicture}(-1.8,-1)(1.8,1)
\rput(-8.5,0){$2~~leaves$}
\psline(-1.8,0)(1.8,0)
\qdisk(-1.8,0){2.25pt}\qdisk(-1.2,0){2.25pt}\qdisk(-.6,0){2.25pt}
\qdisk(0,0){2.25pt}\qdisk(.6,0){2.25pt}\qdisk(1.2,0){2.25pt}\qdisk(1.8,0){2.25pt}
\end{pspicture}
\]
\caption{Tree graphs with 7 vertices and their tree shift and flossing move relations.}
\end{figure}
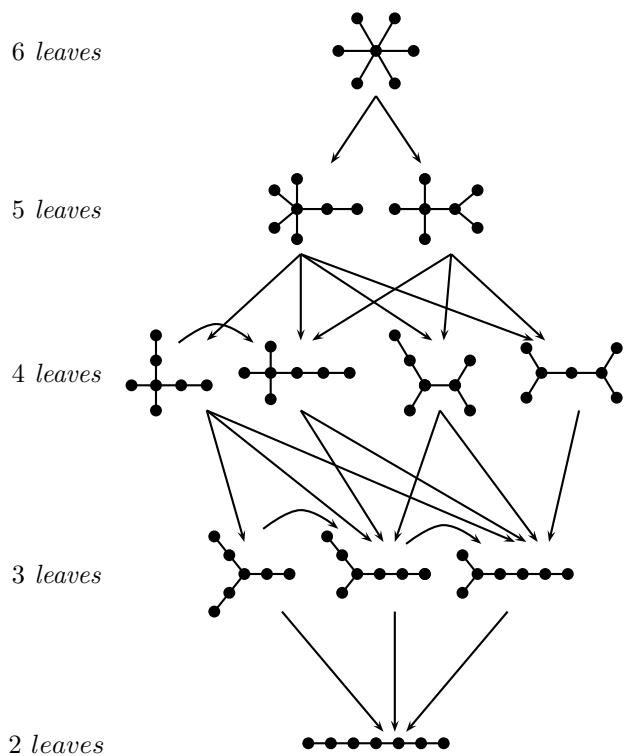
\end{example}

\end{section}

It is suggested in \cite{prw} that a move on a tree graph that increases the \emph{Wiener index} \cite{wie} might approximately lower the $\gamma$-polynomial, although the only moves that we have found that increase the Wiener index and lower the $\gamma$-polynomial are tree shifts and flossing moves.

\end{document}